\documentclass[11pt]{article}

\usepackage[T1]{fontenc}
\usepackage[utf8]{inputenc}
\usepackage{lmodern}
\usepackage{amsmath,amssymb,amsthm,mathtools}
\usepackage{enumitem}
\usepackage{geometry}
\usepackage{hyperref}
\usepackage{xcolor}
\usepackage{booktabs}
\usepackage{microtype}

\hypersetup{
	colorlinks=true,
	linkcolor=blue,
	citecolor=blue,
	urlcolor=blue
}

\newtheorem{theorem}{Theorem}[section]
\newtheorem{proposition}[theorem]{Proposition}
\newtheorem{lemma}[theorem]{Lemma}
\newtheorem{corollary}[theorem]{Corollary}
\newtheorem{question}[theorem]{Question}
\theoremstyle{definition}
\newtheorem{definition}[theorem]{Definition}
\newtheorem{example}[theorem]{Example}
\theoremstyle{remark}
\newtheorem{remark}[theorem]{Remark}

\newtheorem*{maintheorem}{\textbf{Theorem}}

\newcommand{\Fp}{\mathbb F_p}
\newcommand{\Zp}{\mathbb Z_p}

\newcommand{\PD}{\mathrm{PD}}
\newcommand{\HH}{\mathrm H}
\newcommand{\cdp}{\operatorname{cd}}
\newcommand{\rk}{\operatorname{rk}}
\newcommand{\im}{\operatorname{im}}
\newcommand{\cl}{\operatorname{cl}}

\title{Canonical Kummerian Kernels in 1-Smooth\\Pro-\texorpdfstring{$p$}{p} Poincar\'e Duality Groups of Dimension Three}
\author{Marina Palaisti}
\date{}

\begin{document}
	\maketitle
	
\begin{abstract}
Let $(G,\theta)$ be a finitely generated 1-smooth oriented pro-$p$ group with torsion-free orientation, and suppose that $G$ is a Poincar\'e duality pro-$p$ group of dimension three. The orientation determines a canonical closed normal subgroup $K_\theta(G)$ contained in the Frattini subgroup, and for a Kummerian pair the quotient $G/K_\theta(G)$ is the maximal $\theta$-abelian quotient.
		
We prove that
\[K_\theta(G)\text{ finitely generated} \quad\Longrightarrow\quad K_\theta(G)=1.\]
Thus every non-$\theta$-abelian 1-smooth $\PD^3$ pro-$p$ group has an infinitely generated canonical kernel, where ``infinitely generated'' means not topologically finitely generated. The proof combines the normal-subgroup theorem for pro-$p$ $\PD^3$ groups with the classification of $p$-adic analytic 1-smooth pro-$p$ groups. We also obtain a cohomological description of the canonical extension from the Hochschild--Serre five-term sequence and Poincar\'e duality. Writing $K=K_\theta(G)$, $Q=G/K$, and $V=\HH^1(G,\Fp)$, we identify $\HH^1(K,\Fp)^Q$ with the kernel of the map $\bigwedge^2V\to V^*$ induced by the triple cup-product form of $G$. Consequently, whenever $K\neq1$, its first cohomology is infinite-dimensional, whereas its $Q$-invariant subspace is nonzero and finite-dimensional. We obtain exact dimension restrictions in small generator ranks and show that a non-$\theta$-abelian group with $2\leq d(G)\leq4$ is not Bloch--Kato. In the finitely generated-kernel case, $G$ is a three-dimensional uniform $\theta$-abelian group with exterior mod-$p$ cohomology.
	\end{abstract}
	
\noindent\textbf{Keywords.} pro-$p$ groups; Poincar\'e duality; 1-smooth groups; 1-cyclotomicity; Kummerian groups; Frattini subgroups; Demu\v{s}kin groups; $p$-adic analytic groups; Hochschild--Serre spectral sequence.
	
	~
	~
	
\noindent\textbf{2010 Mathematics Subject Classification.} Primary 20E18; Secondary 20J06, 12G05.
	
\section{Introduction}
	
The smoothness framework of De Clercq and Florence was introduced to isolate formal lifting properties modeled on Hilbert 90 and Galois cohomology \cite{DeClercqFlorence}. In the pro-$p$ setting, an orientation
\[ \theta:G\longrightarrow \Zp^\times\]
allows one to formulate 1-smoothness, also called 1-cyclotomicity. The Smoothness Conjecture predicts that 1-smoothness imposes strong quadraticity properties on mod-$p$ cohomology; in particular, a 1-smooth pro-$p$ group is expected to be weakly Bloch--Kato.
	
Several substantial classes are understood. Quadrelli proved the conjectural implication for finitely generated $p$-adic analytic pro-$p$ groups \cite{QuadrelliHHA} and established related results for solvable groups \cite{QuadrelliCMB}. Snopce and Zalesskii treated right-angled Artin pro-$p$ groups \cite{SnopceZalesskii}. Poincar\'e duality groups remain a particularly natural testing ground: the cohomology is rigid, while the subgroup structure becomes increasingly subtle with the dimension. Quadrelli explicitly asked whether 1-smooth Poincar\'e duality pro-$p$ groups are weakly Bloch--Kato \cite[Remark~5.10]{QuadrelliHHA}.
	
The dimension-two case is classical. Pro-$p$ $\PD^2$ groups are Demu\v{s}kin groups, and their canonical cyclotomic behavior is well understood. Dimension three is the first case in which one can combine the canonical subgroup attached to an orientation with a strong normal-subgroup theorem for the ambient Poincar\'e duality group.
	
For an oriented pro-$p$ group $(G,\theta)$, let $K_\theta(G)$ denote the closed normal subgroup generated by the orientation relations 
\[h^{-\theta(g)}ghg^{-1}, \qquad
g\in G,\ h\in\ker(\theta).\]
This subgroup satisfies \[K_\theta(G)\subseteq\ker(\theta)\cap\Phi(G).\]
When $(G,\theta)$ is Kummerian, the quotient
\[ G/K_\theta(G)\] is the maximal $\theta$-abelian quotient. Thus 1-smoothness does not merely provide some analytic quotient: it provides a canonical one, whose kernel is contained in the Frattini subgroup and whose quotient preserves the entire degree-one mod-$p$ cohomology of $G$.
	
The main result of the paper is the following finite-generation obstruction.
	
\begin{maintheorem} \label{thm:main-intro}
\textit{Let $(G,\theta)$ be a finitely generated 1-smooth oriented pro-$p$ group with torsion-free orientation. Suppose that $G$ is a pro-$p$ $\PD^3$ group. Then}
\[K_\theta(G)=1 \qquad\textit{or}\qquad K_\theta(G)\textit{ is infinitely generated}.\]
\textit{Equivalently, if $K_\theta(G)$ is finitely generated, then} $K_\theta(G)=1$.
\end{maintheorem}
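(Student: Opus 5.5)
Assume $K=K_\theta(G)$ is finitely generated and $K\neq 1$; the aim is a contradiction. The plan runs in three steps.

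\textbf{Step 1: the normal-subgroup theorem.} For a pro-$p$ $\PD^3$ group $G$ (Hillman's pro-$p$ analogue of the Bieri--Hillman/Eckmann theorem), a nontrivial finitely generated closed normal subgroup $N$ of infinite index satisfies one of two alternatives:
\begin{itemize}
\item $N\cong\Zp$ and $G/N$ is virtually a Demu\v{s}kin group;
\item $N$ is a Demu\v{s}kin group and $G/N$ is virtually $\Zp$.
\end{itemize}
If $K$ had finite index, then $K\subseteq\Phi(G)$ would force $\Phi(G)$ open, hence $G$ finite. This is impossible, since $\PD^3$ groups are torsion-free and infinite. So $K$ has infinite index and the dichotomy applies.

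\textbf{Step 2: the quotient is $p$-adic analytic.} Since $(G,\theta)$ is 1-smooth, it is Kummerian, so $Q=G/K$ is the maximal $\theta$-abelian quotient. Concretely,
\[Q\cong \Zp^{r}\rtimes\Zp \quad\text{or}\quad Q\cong\Zp^{r},\]
with the complement acting through $\theta$ by scalar multiplication. As $\theta$ is torsion-free, $Q$ is torsion-free and $p$-adic analytic of dimension $d(Q)=d(G)$, the last equality because $K\subseteq\Phi(G)$. I now rule out each alternative.

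\begin{itemize}
\item \emph{Case $K$ Demu\v{s}kin.} Then $Q$ is virtually $\Zp$, so being torsion-free it is $\Zp$, and $d(G)=1$. But a $\PD^3$ group has $\dim \HH^1(G,\Fp)=\dim\HH^2(G,\Fp)$. With $d(G)=1$ one would also need $\HH^2(G,\Fp)$ to be one-dimensional, and the five-term sequence for $1\to K\to G\to\Zp\to1$ contradicts this. The cleanest route is that $G$ would then be a semidirect product of a Demu\v{s}kin group by $\Zp$, so $d(G)\geq d(K)\geq 2$ up to Frattini considerations. More precisely, $K\subseteq\Phi(G)$ together with $G/K\cong\Zp$ forces $G$ procyclic by the Burnside basis theorem, which is a contradiction.
\item \emph{Case $K\cong\Zp$.} Then $Q$ is virtually Demu\v{s}kin and $p$-adic analytic. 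A $p$-adic analytic Demu\v{s}kin group has dimension $\le 2$ and is $\theta$-abelian $\Zp\rtimes\Zp$. So $Q$ has virtual cohomological dimension $2$, and being torsion-free, $\cdp Q=2$. Moreover $Q$ is $\theta$-abelian with $d(Q)=d(G)$, and $\theta$-abelian groups are uniform with $\cdp Q=d(Q)$. Hence $d(G)=2$. Then $G$ is an extension of $\Zp$ by a 2-dimensional analytic group, so $G$ itself is $p$-adic analytic of dimension $3$.
\end{itemize}

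\textbf{Step 3: close the analytic case.} Now apply the classification of 1-smooth $p$-adic analytic pro-$p$ groups with torsion-free orientation (Quadrelli): such $G$ is $\theta$-abelian. Then $K_\theta(G)=1$, contradicting $K\cong\Zp$. Alternatively, $d(G)=2$ conflicts directly with $\dim G=3$ for a $\theta$-abelian (hence uniform) group, since there $d=\dim$.

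\textbf{Main obstacle.} The delicate step is invoking the correct pro-$p$ normal-subgroup theorem for $\PD^3$ groups with its exact hypotheses, notably finite generation and infinite index. Also delicate is checking that "virtually Demu\v{s}kin and $\theta$-abelian" really forces dimension two. I would handle the latter through the uniform structure of $\theta$-abelian groups, using $\cdp=d$ and the fact that finite-index subgroups preserve $\cdp$.
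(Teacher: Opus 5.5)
Your overall route matches the paper's. You use the same $\PD^3$ dichotomy (Theorem~\ref{thm:GJPPZ}, which the paper takes from Grunewald--Jaikin-Zapirain--Pinto--Zalesskii with exactly the hypotheses you list). You kill the Demu\v{s}kin case via $d(G)=d(G/K)=1$. You kill the $\Zp$ case by making $G$ $p$-adic analytic and applying Quadrelli's analytic theorem.

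\textbf{One step fails as written: your proof that $K$ has infinite index.} In a finitely generated pro-$p$ group, $\Phi(G)$ is \emph{always} open, of index $p^{d(G)}$. So ``$\Phi(G)$ open, hence $G$ finite'' is a non sequitur. Indeed, $G=\Zp$ with $K=p\Zp$ has $K$ of finite index and $K\subseteq\Phi(G)$, with no contradiction. The missing ingredient is that $Q=G/K$ is torsion-free, which you only establish in Step~2. If $[G:K]<\infty$, then $Q$ is a finite torsion-free group, hence trivial. Then $G=K\subseteq\Phi(G)$, which forces $G=1$ and contradicts $\cdp(G)=3$. The paper phrases this as: $d(Q)=d(G)\geq1$, so $Q$ is a nontrivial uniform, hence infinite, group. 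So Step~2 must come before the dichotomy is applied.

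\textbf{Demu\v{s}kin case.} Discard the intermediate reasoning. The inequality $d(G)\geq d(K)$ is false in general, since the number of generators is not monotone on closed (even normal) subgroups. The five-term-sequence remark is never carried out. Your final Burnside-basis line is the whole argument and coincides with the paper's. It gives $d(G)=d(Q)=1$, so $G$ is procyclic. Having no torsion, $G\simeq\Zp$, contradicting $\cdp(G)=3$.

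\textbf{$\Zp$-kernel case.} Your computation $d(G)=2$ from the virtually Demu\v{s}kin structure is correct but unnecessary. The step you flagged as delicate can therefore be skipped. All you need is that an extension of a $p$-adic analytic group by $\Zp$ is analytic. The paper proves this in Lemma~\ref{lem:cyclic-extension} by showing $d(H)\leq\rk(Q)+1$ for every closed $H\leq G$. Quadrelli's theorem then gives $K_\theta(G)=1$ directly.
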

	
Here and throughout, ``infinitely generated'' means not topologically finitely generated; equivalently, for a pro-$p$ group $K$ it means $d(K)=\dim_{\Fp}\HH^1(K,\Fp)=\infty$. The point is not that finite generation of the kernel is known a priori for a large class of nonanalytic examples. Rather, the theorem shows that finite generation cannot occur as an intermediate form of complexity. The canonical obstruction is either absent or genuinely infinite-rank. 
This gives a structural restriction on all non-$\theta$-abelian 1-smooth pro-$p$ $\PD^3$ groups. Both alternatives are realized: the $\theta$-abelian case consists of three-dimensional uniform groups, while cyclotomic extensions of Demu\v{s}kin groups provide non-$\theta$-abelian examples with free canonical kernels of infinite rank.

Two existing results are decisive. The first is the normal-subgroup theorem of Grunewald, Jaikin-Zapirain, Pinto and Zalesskii: a nontrivial finitely generated normal subgroup of infinite index in a pro-$p$ $\PD^3$ group is either procyclic with virtually Demu\v{s}kin quotient, or Demu\v{s}kin with virtually procyclic quotient \cite[Corollary~1.5]{GrunewaldEtAl}. The second is Quadrelli's theorem that a finitely generated $p$-adic analytic 1-smooth pro-$p$ group with torsion-free orientation is $\theta$-abelian \cite[Theorem~5.7]{QuadrelliHHA}. The new interaction is that the canonical Kummerian kernel is an intrinsic normal subgroup to which the $\PD^3$ alternative applies, while its quotient is already $\theta$-abelian and hence analytic. 

Although these ingredients are individually established, they arise in rather different parts of the literature and do not by themselves single out the canonical kernel considered here. The observation used in this paper is that, for a 1-smooth pair, $K_\theta(G)$ simultaneously lies in the Frattini subgroup, has a canonical $\theta$-abelian quotient, and preserves degree-one mod-$p$ cohomology. These three properties make the $\PD^3$ normal-subgroup dichotomy directly applicable to an intrinsic subgroup rather than to a chosen analytic quotient. To the best of our knowledge, the resulting finite-generation obstruction for $K_\theta(G)$ has not previously been recorded.
	
The finite-generation obstruction has further consequences. Since $K_\theta(G)$ has infinite index whenever it is nontrivial, a non-$\theta$-abelian example necessarily satisfies
\[d\bigl(K_\theta(G)\bigr)=\infty,	\qquad \cdp\bigl(K_\theta(G)\bigr)\leq2.\]
Moreover, the canonical Frattini extension
\[ 1 \longrightarrow K_\theta(G) \longrightarrow G \longrightarrow G/K_\theta(G) \longrightarrow1 \]
has an injective transgression
\[ \HH^1(K_\theta(G),\Fp)^{G/K_\theta(G)} \hookrightarrow \HH^2(G/K_\theta(G),\Fp).
\]
Since the quotient is uniform, the target is finite-dimensional and can be computed explicitly from degree one. Thus any nontrivial canonical kernel has infinite-dimensional first cohomology but only a finite-dimensional invariant part. In particular, the quotient cannot act trivially on the Frattini quotient of the kernel. 

When $K_\theta(G)$ is finitely generated, the conclusion is explicit: $G$ is a three-dimensional uniform $\theta$-abelian group. Hence 
\[ G\simeq\Zp^3 \] for the trivial orientation, while for a nontrivial orientation \[	G\simeq\Zp^2\rtimes_\theta\Zp\] with scalar action. Lazard's theorem then gives 
\[ \HH^\bullet(G,\Fp) \simeq \bigwedge\nolimits^\bullet\HH^1(G,\Fp) \]
as graded $\Fp$-algebras.
	
The paper is organized as follows. Section~\ref{sec:smooth} fixes the orientation conventions and develops the canonical Kummerian quotient used throughout the paper. Section~\ref{sec:pd3} recalls the relevant structure of pro-$p$ $\PD^3$ groups and establishes a finite-rank lemma for procyclic extensions of $p$-adic analytic groups. Section~\ref{sec:main} proves the finite-generation obstruction for $K_\theta(G)$. Section~\ref{sec:cohomological} derives further consequences from the Hochschild--Serre five-term sequence, including constraints on the invariant subspace of $\HH^1(K_\theta(G),\Fp)$. Section~\ref{sec:classification} describes explicitly the finitely generated-kernel case and classifies the resulting $\theta$-abelian $\PD^3$ groups. Finally, Section~\ref{sec:examples} discusses examples, the scope of the main result, and the structure that any remaining infinite-kernel case must satisfy.

\section{1-smooth orientations and the canonical quotient} \label{sec:smooth}
	
\subsection{Orientation conventions}
	
An \textit{oriented pro-$p$ group} is a pair $(G,\theta)$ consisting of a pro-$p$ group $G$ and a continuous homomorphism
\[ \theta:G\longrightarrow\Zp^\times. \]
Throughout the paper we work with torsion-free orientations in the standard pro-$p$ sense. Thus for odd $p$ we assume
\[ \im(\theta)\subseteq1+p\Zp,\]
and for $p=2$ we impose the standing convention
\[\im(\theta)\subseteq1+4\mathbb Z_2.\]
In particular, $\im(\theta)$ is torsion-free. This is the convention under which the Kummerian and analytic structure theorems used below are formulated.
	
Let $\Zp(\theta)$ denote the rank-one $\Zp$-module on which $G$ acts through $\theta$. Reduction modulo $p^n$ gives the discrete module $\Zp(\theta)/p^n$.
	
\begin{definition}
 An oriented pro-$p$ group $(G,\theta)$ is \textit{Kummerian} if, for every $n\geq1$, the natural reduction map
 \[\HH^1\!\left(G,\Zp(\theta)/p^n\right)\longrightarrow \HH^1(G,\Fp)\]
is surjective.
\end{definition}
\begin{definition}
An oriented pro-$p$ group $(G,\theta)$ is \textit{1-smooth} if every closed subgroup $H\leq G$, equipped with the restricted orientation $\theta|_H$, is Kummerian. \end{definition}
	
This is the formulation used in \cite{QuadrelliHHA}. In particular, 1-smoothness immediately implies that $(G,\theta)$ itself is Kummerian.

	\subsection{The canonical Kummerian kernel}
	
For an oriented pro-$p$ group $(G,\theta)$ define
\begin{equation}
K_\theta(G)=\cl\left\langle h^{-\theta(g)}ghg^{-1} 	\ \middle|\ g\in G,\ h\in\ker(\theta) \right\rangle. \label{eq:Kdef}\end{equation}
The subgroup $K_\theta(G)$ is closed and normal in $G$, and one has
\begin{equation}
[\ker(\theta),\ker(\theta)] \subseteq K_\theta(G)\subseteq \ker(\theta)\cap\Phi(G). \label{eq:Kbasic} \end{equation}
See \cite{EfratQuadrelli,QuadrelliWeigelBP}.
	
The quotient by $K_\theta(G)$ forces the orientation relations
\[ghg^{-1}=h^{\theta(g)} \qquad (g\in G,\ h\in\ker(\theta)).\]
The remaining issue is whether $\ker(\theta)/K_\theta(G)$ contains $p$-power torsion. The Kummerian condition is precisely what removes that obstruction in the finitely generated torsion-free setting. 

\begin{definition}
A torsion-free oriented pro-$p$ group $(G,\theta)$ is \textit{$\theta$-abelian} if
\[K_\theta(G)=1\]
and $\ker(\theta)$ is a free abelian pro-$p$ group.
\end{definition}
	
Let $I_\theta(G)$ be the isolator of $K_\theta(G)$ in $\ker(\theta)$. We record the precise consequence of Kummerianity that will be used throughout the paper. 

\begin{proposition} \label{prop:canonical}
Let $(G,\theta)$ be a finitely generated Kummerian oriented pro-$p$ group with torsion-free orientation. Then: 
\begin{enumerate}[label=\textup{(\roman*)}]
\item $\ker(\theta)/K_\theta(G)$ is a free abelian pro-$p$ group;
\item \[ I_\theta(G)=K_\theta(G);\]
\item with $\bar G=G/K_\theta(G)$ and the induced orientation $\bar\theta$, the pair $(\bar G,\bar\theta)$ is $\bar\theta$-abelian and is the maximal $\theta$-abelian quotient of $(G,\theta)$; 
\item $K_\theta(G)\subseteq\Phi(G)$.
	\end{enumerate}\end{proposition}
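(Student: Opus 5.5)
The plan is to reduce everything to the Efrat--Quadrelli characterization of Kummerian pairs: for a finitely generated oriented pro-$p$ group with torsion-free orientation, $(G,\theta)$ is Kummerian if and only if $\ker(\theta)/K_\theta(G)$ is a torsion-free (hence free, being finitely generated as a $\Zp$-module) abelian pro-$p$ group \cite{EfratQuadrelli}. I will prove (i) and (ii) from this, and then derive (iii) and (iv) formally. First, by \eqref{eq:Kbasic}, $[\ker\theta,\ker\theta]\subseteq K_\theta(G)$, so $A=\ker(\theta)/K_\theta(G)$ is abelian. It is finitely generated because $\ker(\theta)$ is open when $\im\theta$ is finite, or has procyclic quotient $\im\theta\simeq\Zp$ otherwise. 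In either case $\ker\theta$ is finitely generated as a closed normal subgroup and $A$ is a finitely generated $\Zp$-module. The key point is torsion-freeness of $A$. For this I would use the Kummerian surjectivity $\HH^1(G,\Zp(\theta)/p^n)\to\HH^1(G,\Fp)$ for all $n$. Passing to the limit over $n$ gives a statement about $\HH^1(G,\Zp(\theta))$ and shows that every character $G\to\Fp$ lifts to a continuous crossed homomorphism into $\Zp(\theta)$. Restricted to $\ker\theta$, such a cocycle is a homomorphism $\ker\theta\to\Zp$. The cocycle identity with $g\in G$ shows that it kills the relations $h^{-\theta(g)}ghg^{-1}$, so it factors through $A$. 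If $A$ had torsion $T\neq0$, I would choose a character of $G$ (or, if needed, of a suitable open subgroup) that is nontrivial on the image of $T$ in the Frattini quotient. Its lifts factor through the torsion-free quotient $A/T$ on $\ker\theta$, which gives a contradiction. This is the main obstacle, and it is exactly the content of \cite[Theorem]{EfratQuadrelli}. I would cite that theorem rather than redo the bookkeeping, including the case where $T$ lies in $\Phi$-deep layers, which is handled by working modulo $p^n$ with $n$ large relative to the exponent of $T$.

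Given (i), statement (ii) follows at once. The isolator $I_\theta(G)$ consists of the $x\in\ker\theta$ with some $x^{p^k}\in K_\theta(G)$, so $I_\theta(G)/K_\theta(G)$ is the torsion subgroup of $A$, which is trivial.

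For (iii), the image of each generator of $K_\theta(G)$ is trivial in $\bar G$, and $\ker\bar\theta=\ker(\theta)/K_\theta(G)$ because $K_\theta(G)\subseteq\ker\theta$. Hence $K_{\bar\theta}(\bar G)$ is the image of $K_\theta(G)$, which is $1$, and $\ker\bar\theta$ is free abelian by (i). Maximality is formal: if $N\trianglelefteq G$ lies in $\ker\theta$ and $(G/N,\theta)$ is $\theta$-abelian, then the images of the generators of $K_\theta(G)$ lie in $K_\theta(G/N)=1$. So $K_\theta(G)\subseteq N$, and $G/N$ is a quotient of $\bar G$.

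For (iv), I would show that every continuous homomorphism $\chi:G\to\Fp$ vanishes on the generators $h^{-\theta(g)}ghg^{-1}$. Applying $\chi$ gives $(1-\theta(g))\chi(h)$. Since $\theta(g)\in1+p\Zp$ (or $1+4\mathbb Z_2$), this is $\equiv0\pmod p$. Hence $K_\theta(G)\subseteq\bigcap\ker\chi=\Phi(G)$, which recovers \eqref{eq:Kbasic}.
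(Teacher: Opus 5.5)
Your proposal is correct. For (i), you and the paper rely on the same input: the Efrat--Quadrelli characterization (for finitely generated $G$ with torsion-free orientation, Kummerian holds if and only if $\ker(\theta)/K_\theta(G)$ is torsion-free, hence free), which the paper cites as \cite[Theorem~2.2]{QuadrelliHHA}. The routes differ on (ii)--(iv). The paper cites \cite[Proposition~2.6 and Corollary~2.9]{QuadrelliWeigelBP} for the isolator equality and for maximality, and refers to \eqref{eq:Kbasic}, itself taken from the literature, for the Frattini inclusion. You instead derive all three directly, and the derivations are sound: $I_\theta(G)/K_\theta(G)$ is the torsion subgroup of $A$, so (ii) is immediate from (i); $K_{\bar\theta}(\bar G)$ is the image of $K_\theta(G)$, and any $\theta$-abelian quotient $G/N$ with $N\subseteq\ker(\theta)$ must kill the generators of $K_\theta(G)$; and $\chi(h^{-\theta(g)}ghg^{-1})=(1-\theta(g))\chi(h)=0$ in $\Fp$. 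Your route makes explicit that (ii) and maximality are formal consequences of (i), and that (iv) needs only $\im(\theta)\subseteq 1+p\Zp$, not Kummerianity. The paper's route is shorter and places the statement in the general setting where $K_\theta(G)$ and $I_\theta(G)$ can differ. Two small corrections to your sketch of (i), which is not load-bearing since you cite the theorem. First, $\ker(\theta)$ itself need not be finitely generated: in a free pro-$p$ group on $x,y$ with $\theta(y)=1\neq\theta(x)$ it is free of infinite rank. What holds is that $\ker(\theta)$ is the normal closure of finitely many elements and $\bar G$ acts on $A$ through the scalars $\theta(g)$, so $A$ is a finitely generated $\Zp$-module. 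Second, passing to an open subgroup is not permitted here, since only $G$ itself is assumed Kummerian. It is also unnecessary: $\HH^2(\im\theta,\Fp)=0$ makes restriction $\HH^1(\bar G,\Fp)\to\operatorname{Hom}(A,\Fp)$ surjective, so $A\cap\Phi(\bar G)=pA$, and any torsion element outside $pA$ is detected by a character of $G$.
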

	
\begin{proof}
The equivalence between Kummerianity, freeness of $\ker(\theta)/K_\theta(G)$, and $\bar\theta$-abelianity of the quotient is \cite[Theorem~2.2]{QuadrelliHHA}. In the torsion-free Kummerian setting, the equality $I_\theta(G)=K_\theta(G)$ and the maximality of the resulting $\theta$-abelian quotient are given by \cite[Proposition~2.6 and Corollary~2.9]{QuadrelliWeigelBP}. The final inclusion is part of \eqref{eq:Kbasic}. \end{proof}
	
\begin{remark}
The distinction between $K_\theta(G)$ and its isolator is important in general. Proposition~\ref{prop:canonical} is the reason that no such distinction appears in the remainder of this paper: 1-smoothness implies Kummerianity, so the canonical maximal $\theta$-abelian quotient is genuinely $G/K_\theta(G)$. 
\end{remark}

\subsection{Degree-one cohomology and analyticity}
	
The Frattini inclusion has two immediate consequences that are central to the argument.
	
\begin{lemma} \label{lem:H1}
Let $(G,\theta)$ be as in Proposition~\ref{prop:canonical}, and put
\[K=K_\theta(G), \qquad\bar G=G/K.  \]
Then the quotient map induces an isomorphism
\[ G/\Phi(G) \simeq \bar G/ \Phi(\bar G). \]
Equivalently, inflation is an isomorphism
\[\inf: \HH^1(\bar G,\Fp) \overset{\sim}{\longrightarrow} \HH^1(G,\Fp).\]
In particular,
\[d(\bar G)=d(G).\]
\end{lemma}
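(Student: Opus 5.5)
The plan is to use the standard fact that a closed normal subgroup contained in the Frattini subgroup is invisible to the Frattini quotient. By Proposition~\ref{prop:canonical}(iv) we have $K\subseteq\Phi(G)$. Let $\pi:G\to\bar G$ be the quotient map. The first step is to show that $\pi(\Phi(G))=\Phi(\bar G)$. Here $\Phi(G)=G^p[G,G]$ for $p$ odd, and $\Phi(G)=G^2$ for $p=2$, always as the closure $\cl(G^p[G,G])$. Since $\pi$ is a continuous surjection of compact groups, it maps this closed subgroup onto $\cl(\bar G^p[\bar G,\bar G])=\Phi(\bar G)$. Alternatively, one can argue directly with maximal open subgroups: these are exactly the preimages of maximal open subgroups of $\bar G$, because every maximal open subgroup $M$ of $G$ is normal of index $p$, contains $\Phi(G)\supseteq K$, and hence $M=\pi^{-1}(\pi(M))$. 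Intersecting gives $\pi^{-1}(\Phi(\bar G))=\Phi(G)$.

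The second step is to conclude. Since $\pi^{-1}(\Phi(\bar G))=\Phi(G)$, the map $\pi$ induces a surjection $G\to\bar G/\Phi(\bar G)$ whose kernel is exactly $\Phi(G)$. This gives the isomorphism $G/\Phi(G)\simeq\bar G/\Phi(\bar G)$.

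For the cohomological reformulation, recall that $\HH^1(H,\Fp)=\mathrm{Hom}(H/\Phi(H),\Fp)$ for any pro-$p$ group $H$. Inflation is precomposition with $\pi$, so it is dual to the isomorphism just established, and hence is an isomorphism. One can cross-check this with the five-term sequence: the first map $\HH^1(\bar G,\Fp)\to\HH^1(G,\Fp)$ is injective, and restriction $\HH^1(G,\Fp)\to\HH^1(K,\Fp)$ is zero because every homomorphism $G\to\Fp$ kills $\Phi(G)\supseteq K$. Exactness then gives surjectivity of inflation. Finally, $d(H)=\dim_{\Fp}\HH^1(H,\Fp)$ by the Burnside basis theorem for pro-$p$ groups, so $d(\bar G)=d(G)$. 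This dimension is finite because $G$ is finitely generated.

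There is no real obstacle. The only point needing care is the identity $\pi^{-1}(\Phi(\bar G))=\Phi(G)$, which relies on $K\subseteq\Phi(G)$. This inclusion is taken from \eqref{eq:Kbasic} through Proposition~\ref{prop:canonical}(iv), and the rest is formal.
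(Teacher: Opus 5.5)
Your proposal is correct and follows the paper's proof. Both arguments use $K\subseteq\Phi(G)$ to get $\Phi(\bar G)=\Phi(G)/K$, which you justify in more detail than the paper does, either via $\pi(\Phi(G))=\Phi(\bar G)$ or via maximal open subgroups, and then dualize by applying $\mathrm{Hom}(-,\Fp)$ to obtain the inflation isomorphism and $d(\bar G)=d(G)$; the five-term cross-check is a harmless extra.
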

	
\begin{proof}
Since $K\subseteq\Phi(G)$, one has \[ \Phi(\bar G)=\Phi(G)/K. \]Hence
\[\bar G/\Phi(\bar G) \simeq G /\Phi(G). \]
Taking continuous homomorphisms to $\Fp$ gives the degree-one inflation isomorphism. 
\end{proof}

\begin{proposition}
\label{prop:quotient-uniform}
Let $(G,\theta)$ be finitely generated and Kummerian with torsion-free orientation. Then
\[ \bar G=G/K_\theta(G)\]
is a finitely generated uniform pro-$p$ group. In particular, it is $p$-adic analytic and 
\[\dim(\bar G)=d(\bar G)=d(G).\]
Moreover, \begin{equation} \HH^\bullet(\bar G,\Fp)   \simeq \bigwedge\nolimits^\bullet\HH^1(\bar G,\Fp)
\label{eq:barGcohomology}\end{equation} as graded $\Fp$-algebras.
\end{proposition}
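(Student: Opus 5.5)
By Proposition~\ref{prop:canonical}, the quotient $(\bar G,\bar\theta)$ is $\bar\theta$-abelian. So $\bar N:=\ker(\bar\theta)=\ker(\theta)/K_\theta(G)$ is a free abelian pro-$p$ group, and $\bar G$ acts on it by $\bar g\,\bar h\,\bar g^{-1}=\bar h^{\theta(g)}$. The plan is to describe $\bar G$ explicitly as a semidirect product and then check uniformity by hand.

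\emph{Step 1: $\bar N$ has finite rank.} Since $\bar G$ is finitely generated and $\bar N$ is open or of procyclic cokernel (namely $\bar G/\bar N\simeq\im(\theta)$), $\bar N$ is finitely generated. Indeed, either $\theta$ is trivial, or $\im(\theta)\simeq\Zp$; then $\bar N$ is normal with procyclic quotient, and the $\theta$-relation shows that $\bar N$ is generated as a normal subgroup by finitely many elements. Because conjugation acts by scalars, normal closure adds nothing beyond $\Zp$-multiples, so $\bar N\simeq\Zp^m$ for some finite $m$.

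\emph{Step 2: the semidirect product structure.} If $\theta$ is trivial, then $\bar G=\bar N\simeq\Zp^m$, which is uniform. Otherwise $\im(\theta)\simeq\Zp$ is free pro-$p$, so the projection splits and
\[\bar G\simeq\Zp^m\rtimes\Zp,\]
where a generator $t$ acts by the scalar $\lambda=\theta(t)$, with $\lambda\in1+p\Zp$ (respectively $1+4\mathbb Z_2$).

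\emph{Step 3: uniformity.} The group is torsion-free and finitely generated. Its commutators are $[t,v]=v^{\lambda-1}\in\bar N^{p}$ (respectively $\bar N^4$ when $p=2$), so $[\bar G,\bar G]\subseteq\bar G^{p}$ (respectively $\bar G^4$), i.e.\ $\bar G$ is powerful. A torsion-free, finitely generated, powerful pro-$p$ group is uniform by Dixon--du Sautoy--Mann--Segal. The torsion-freeness of the orientation is exactly what is needed in this step.

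\emph{Step 4: dimensions and cohomology.} For a uniform group one has $\dim\bar G=d(\bar G)$, and Lemma~\ref{lem:H1} gives $d(\bar G)=d(G)$. Lazard's theorem then gives \eqref{eq:barGcohomology} for uniform groups; for $p=2$ this requires the refined convention $\im(\theta)\subseteq1+4\mathbb Z_2$, which ensures $\bar G$ is uniform in the $p=2$ sense. Alternatively, the exterior algebra can be computed directly from the semidirect product via Hochschild--Serre: $\HH^\bullet(\Zp^m,\Fp)$ is an exterior algebra on which $t$ acts trivially mod $p$, and the spectral sequence degenerates.

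The main obstacle is Step 1, showing that $\bar N$ is finitely generated. A cleaner route is available: $\bar G/\Phi(\bar G)$ is finite, $\bar G$ is powerful as soon as $\bar N$ is abelian with scalar action (the computation in Step 3 does not need finite rank), and a finitely generated powerful group has every subgroup finitely generated. This gives $m\le d(\bar G)$ directly, so I would derive powerfulness first and deduce Step 1 from it.
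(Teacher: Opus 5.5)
Your proof is correct, but it reaches uniformity by a different route from the paper's. The paper treats this step as a black box. It invokes \cite[Proposition~4.2]{QuadrelliHHA}, which identifies torsion-free $\theta$-abelian pro-$p$ groups with locally uniform ones, and then applies local uniformity to the finitely generated subgroup $\bar G\leq\bar G$.

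You instead prove the needed direction by hand:
\begin{itemize}
\item you split $\bar G\to\im(\theta)\simeq\Zp$ to get an abelian-by-procyclic group with scalar action;
\item you use $\lambda\equiv1$ modulo $p$ (resp.\ $4$) to get $[\bar G,\bar G]\subseteq\bar N^{\lambda-1}\subseteq\bar G^{p}$ (resp.\ $\bar G^{4}$);
\item you conclude with the Dixon--du Sautoy--Mann--Segal criterion that a finitely generated, torsion-free, powerful pro-$p$ group is uniform.
\end{itemize}
This makes transparent exactly where the convention $\im(\theta)\subseteq1+p\Zp$ (resp.\ $1+4\mathbb Z_2$) enters. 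It also produces at once the semidirect-product description that the paper derives only later, in Proposition~\ref{prop:classification}. The paper's citation buys brevity, and it also covers infinitely generated $\theta$-abelian groups, which is not needed here.

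Two refinements. First, your Step~1 is not actually needed, so the ``main obstacle'' disappears. Torsion-freeness of $\bar G$ follows from $\bar N$ being free abelian and $\im(\theta)$ being torsion-free. Neither the splitting nor the powerfulness computation uses finite rank, and finite rank of $\bar N$ then comes for free from uniformity, as you note at the end. Your initial justification in Step~1 (``open or of procyclic cokernel'') would not suffice on its own, since normal subgroups with procyclic quotient in finitely generated pro-$p$ groups can be infinitely generated. It is the scalar action that saves it.

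Second, in Step~3 you should control all commutators, not just $[t,v]$. For instance, observe that $\bar G/\bar N^{\lambda-1}$ is abelian, because $t$ acts trivially on $\bar N/\bar N^{\lambda-1}$; hence $[\bar G,\bar G]\subseteq\bar N^{\lambda-1}$.

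The remaining steps, Lemma~\ref{lem:H1} and Lazard's theorem, coincide with the paper's. Your Hochschild--Serre alternative gives the Betti numbers immediately, since there are only two columns. Identifying the multiplicative structure as exterior, in particular the vanishing of squares when $p=2$, would need a further argument beyond degeneration.
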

	
\begin{proof}
By Proposition~\ref{prop:canonical}, $(\bar G,\bar\theta)$ is $\bar\theta$-abelian. Proposition~4.2 of \cite{QuadrelliHHA} identifies torsion-free $\theta$-abelian pro-$p$ groups with locally uniform pro-$p$ groups. Since $\bar G$ is finitely generated, the subgroup $\bar G\leq\bar G$ is itself finitely generated; applying the definition of local uniformity to this subgroup shows explicitly that $\bar G$ is uniform. A uniform pro-$p$ group is $p$-adic analytic, and its analytic dimension equals its minimal number of generators. Lemma~\ref{lem:H1} gives $d(\bar G)=d(G)$. Finally, \eqref{eq:barGcohomology} is Lazard's cohomology theorem for uniform pro-$p$ groups \cite{Lazard}; see also \cite[(4.1)]{QuadrelliHHA}. \end{proof}

\section{Poincar\'e duality in dimension three} \label{sec:pd3}
	
We recall only those features of pro-$p$ Poincar\'e duality groups that enter the proof.
	
\begin{definition}
A pro-$p$ group $G$ is a Poincar\'e duality pro-$p$ group of dimension $n$, abbreviated $\PD^n$, if it has finite cohomological dimension $n$, satisfies the standard finiteness condition, and its continuous cohomology satisfies Poincar\'e duality in dimension $n$. 
\end{definition}

A pro-$p$ group of finite cohomological dimension has no nontrivial $p$-torsion. Subgroups of infinite index in a pro-$p$ $\PD^n$ group have cohomological dimension at most $n-1$. In low dimensions, pro-$p$ $\PD^1$ groups are isomorphic to $\Zp$, while pro-$p$ $\PD^2$ groups are Demu\v{s}kin groups. We shall also use the following immediate numerical consequence of Poincar\'e duality.
	
\begin{lemma}\label{lem:PDdimensions}
If $G$ is a pro-$p$ $\PD^3$ group, then
\[\dim_{\Fp}\HH^2(G,\Fp)=\dim_{\Fp}\HH^1(G,\Fp)=d(G).\]
\end{lemma}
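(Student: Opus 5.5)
The plan is to apply Poincaré duality directly with $\Fp$ coefficients. For a pro-$p$ group the dualizing module is $\Fp$ itself: the trivial module is the only simple discrete $\Fp[[G]]$-module, and the dualizing character is trivial modulo $p$. So for a $\PD^3$ group $G$ the cup product
\[
\HH^i(G,\Fp)\times\HH^{3-i}(G,\Fp)\longrightarrow\HH^3(G,\Fp)\simeq\Fp
\]
is a nondegenerate pairing of finite-dimensional $\Fp$-vector spaces. The finiteness of $\HH^i(G,\Fp)$ in each degree is part of the standard finiteness condition in the definition.

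Taking $i=1$, nondegeneracy gives an isomorphism $\HH^2(G,\Fp)\simeq\HH^1(G,\Fp)^*$, and hence
\[
\dim_{\Fp}\HH^2(G,\Fp)=\dim_{\Fp}\HH^1(G,\Fp).
\]
It then remains to identify $\dim_{\Fp}\HH^1(G,\Fp)$ with $d(G)$. This follows from the Burnside basis theorem for pro-$p$ groups: $\HH^1(G,\Fp)=\mathrm{Hom}(G/\Phi(G),\Fp)$, and $G/\Phi(G)$ is an elementary abelian group of rank $d(G)$. This rank is finite because $G$ is of type $FP_\infty$, in particular finitely generated.

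The only step needing care is the claim that no twist appears in the duality with $\Fp$ coefficients. If the dualizing module is $\Zp(\chi)$ for some character $\chi$, then its reduction mod $p$ is trivial, since $\chi$ takes values in $1+p\Zp$ for odd $p$, and $\Zp^\times\to\mathbb F_2^\times$ is trivial for $p=2$. So the pairing above is genuinely between untwisted $\Fp$-cohomology groups. The argument never uses orientation hypotheses on $G$, so the lemma holds for every pro-$p$ $\PD^3$ group.
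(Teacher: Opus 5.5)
Your proposal is correct and follows essentially the same route as the paper: Poincar\'e duality with $\Fp$ coefficients, where the mod-$p$ dualizing character is trivial for a pro-$p$ group, gives $\HH^2(G,\Fp)\simeq\HH^1(G,\Fp)^*$. Then $\dim_{\Fp}\HH^1(G,\Fp)=d(G)$ by the Burnside basis theorem. Your claim that $\chi$ lands in $1+p\Zp$ for odd $p$ tacitly uses that the pro-$p$ image of $G$ in $\Zp^\times\simeq\mu_{p-1}\times(1+p\Zp)$ has trivial component in $\mu_{p-1}$, which is exactly the paper's order argument.
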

	
\begin{proof}
Poincar\'e duality gives a perfect pairing between degree one and degree two mod-$p$ cohomology. The mod-$p$ dualizing character is trivial for a pro-$p$ group, since its image in $\Fp^\times$ has both $p$-power order and order dividing $p-1$ (and $\mathbb F_2^\times$ is trivial). Hence
\[\dim_{\Fp}\HH^2(G,\Fp)= \dim_{\Fp}\HH^1(G,\Fp).\]
Finally, $\dim_{\Fp}\HH^1(G,\Fp)=d(G)$ for every finitely generated pro-$p$ group.
\end{proof}
	
The decisive dimension-three input is the following theorem.
	
\begin{theorem}[Grunewald--Jaikin-Zapirain--Pinto--Zalesskii] \label{thm:GJPPZ}
Let $G$ be a finitely generated pro-$p$ $\PD^3$ group and let $1\neq N\trianglelefteq G$ be a finitely generated normal subgroup of infinite index. Then one of the following alternatives holds: \begin{enumerate}[label=\textup{(\roman*)}]
\item $N\simeq\Zp$ and $G/N$ is virtually Demu\v{s}kin;
\item $N$ is Demu\v{s}kin and $G/N$ is virtually $\Zp$.
\end{enumerate} \end{theorem}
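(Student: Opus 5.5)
The plan is to reduce the statement to a pro-$p$ analogue of the Bieri--Eckmann theorem on normal subgroups of duality groups. Specifically, I would aim for: if $G$ is a pro-$p$ $\PD^n$ group and $N\trianglelefteq G$ is a closed normal subgroup such that both $N$ and $G/N$ are of type $FP_\infty$ (after passing to an open torsion-free subgroup of the quotient), then $N$ is $\PD^k$ and the quotient is virtually $\PD^{n-k}$.

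Granting this, the dimension-three case would go as follows.
\begin{enumerate}[label=\textup{(\arabic*)}]
\item Since $N$ is nontrivial of infinite index, $1\leq \cdp(N)\leq 2$.
\item Pro-$p$ $\PD^1$ groups are $\Zp$, and pro-$p$ $\PD^2$ groups are Demu\v{s}kin. So $k=1$ gives alternative (i) and $k=2$ gives alternative (ii).
\item The word ``virtually'' enters only because $G/N$ may have torsion, so one works with an open subgroup $U/N$ of $G/N$ of finite cohomological dimension. The preimage $U$ is again $\PD^3$ and still contains $N$ as a finitely generated normal subgroup.
\end{enumerate}

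The substance is therefore in the finiteness conditions, and I would split into cases according to $\cdp(N)$.

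\textbf{Case $\cdp(N)=1$.} Then $N$ is a finitely generated free pro-$p$ group of rank $r$. I would apply the Hochschild--Serre spectral sequence with coefficients in the completed group algebra $\Fp[[G]]$, using that duality gives $\HH^i(G,\Fp[[G]])=0$ for $i\neq3$. Because $N$ is free of finite rank, $\HH^q(N,\Fp[[G]])$ is concentrated in $q=1$. The spectral sequence then collapses and identifies $\HH^{2}(G/N,\HH^1(N,\Fp[[G]]))$ with $\HH^3(G,\Fp[[G]])\cong\Fp$, with all other terms vanishing. This forces $G/N$ to be virtually a duality group of dimension two. A dimension count via Euler characteristics over open subgroups containing $N$, using the multiplicativity of $\chi$, should then force $r=1$: for $r\geq2$ the Euler characteristic of $N$ is nonzero and would contradict $\chi(G)=0$, which holds for odd-dimensional $\PD$ groups.

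\textbf{Case $\cdp(N)=2$.} The difficulty is that finite generation of $N$ does not immediately give finite presentability. Here I would use that $\HH^2(N,\Fp)$ is controlled by Poincar\'e duality in $G$: restriction and corestriction along the open subgroups $U_j\supseteq N$ with $\bigcap U_j=N$ should give a uniform bound on $\dim\HH^2(U_j\cap\cdots)$, via Lemma~\ref{lem:PDdimensions} applied to each $U_j$. From $d(U_j)\leq d(N)+d(U_j/N)$ one gets $\dim\HH^2(U_j,\Fp)=d(U_j)$. Passing to the limit then shows that $N$ is finitely presented, hence of type $FP_\infty$ since $\cdp(N)=2$. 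The same spectral sequence argument now shows that $\HH^q(N,\Fp[[G]])$ is concentrated in degree two. From this, $N$ is $\PD^2$ (Demu\v{s}kin) and $G/N$ has virtual cohomological dimension one, hence is virtually $\Zp$.

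I expect the main obstacle to be these finiteness steps, especially showing that $N$ is finitely presented when $\cdp(N)=2$ and that $G/N$ is of type $FP_\infty$, since neither follows formally from finite generation. If the limit argument over open subgroups does not go through, my fallback is the pro-$p$ Euler characteristic and the rank-growth estimates for the sequence $U_j$.
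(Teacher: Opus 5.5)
The paper does not prove this statement; it quotes Corollary~1.5 of Grunewald--Jaikin-Zapirain--Pinto--Zalesskii and uses it as a black box. Taken on its own terms, your reduction to a Bieri--Eckmann-type statement is a sensible framework. However, the steps you flag as the main obstacle are essentially the whole content of that corollary, and the mechanisms you propose for them do not work.

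The clearest failure is finite presentability of $N$ when $\cdp(N)=2$. It is true that $\HH^2(N,\Fp)=\varinjlim_j\HH^2(U_j,\Fp)$ and that $\dim_{\Fp}\HH^2(U_j,\Fp)=d(U_j)\leq d(N)+d(U_j/N)$. But this gives no uniform bound, because you have no a priori control of $d(U_j/N)$. It stays bounded along a suitable chain if, say, $G/N$ is virtually procyclic, which is what you are trying to prove. For quotients such as nonabelian free pro-$p$ groups or Demu\v{s}kin groups with $d\geq3$, it grows linearly with the index along every chain. So the limit argument is circular.

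Step (3) has the same problem: it presupposes that $G/N$ has an open subgroup of finite cohomological dimension and of type $\mathrm{FP}_\infty$. Without this, the terms $\HH^p(G/N,\HH^q(N,\Fp[[G]]))$ cannot be rewritten through the dualizing modules of $G/N$. Consequently, the collapse of the spectral sequence yields neither ``$G/N$ is virtually a duality group of dimension two'' in Case~1 nor ``concentration in degree two'' in Case~2. In Case~1 you can in fact extract the $\mathrm{FP}_\infty$ property of $G/N$ from the two-row Hochschild--Serre sequence with $\Fp$-coefficients, since $\HH^1(N,\Fp)$ is finite; virtual torsion-freeness, however, does not come out of it. In Case~2, finite presentability together with $\cdp(N)=2$ does not by itself force $\HH^1(N,\Fp[[N]])=0$, as a free pro-$p$ product of $\Zp^2$ with $\Zp$ shows. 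Ruling this out again needs control of $G/N$.

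The Euler characteristic step in Case~1 is also wrong as stated. Multiplicativity gives $0=\chi(G)=(1-r)\,\chi(G/N)$, which for $r\geq2$ only forces $\chi(G/N)=0$. That is exactly the situation if $G/N$ is virtually $\Zp^2$, a Demu\v{s}kin group with Euler characteristic zero, so nothing is contradicted. The correct way to force $r=1$ is through the dualizing module. Once $G/N$ is known to be virtually $\PD^2$ and of type $\mathrm{FP}_\infty$, the collapsed spectral sequence shows that $\HH^1(N,\Fp[[N]])$ is one-dimensional. Then $N$ is a $\PD^1$ group and hence $\Zp$, whereas this module is infinite for a free pro-$p$ group of rank at least~$2$.
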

	
\begin{proof} This is exactly \cite[Corollary~1.5]{GrunewaldEtAl}. The same pro-$p$ alternative is recalled in the later subgroup analysis of Castellano and Zalesskii \cite{CastellanoZalesskii}. \end{proof}
	
We shall also use the following elementary analytic-extension lemma. It is useful to isolate it because it is precisely what rules out the procyclic-kernel alternative in the main theorem.
	
\begin{lemma}
\label{lem:cyclic-extension}
Let
\[ 1\longrightarrow C \longrightarrow E \longrightarrow Q \longrightarrow 1 \]
be an exact sequence of finitely generated pro-$p$ groups with $C\simeq\Zp$ and $Q$ $p$-adic analytic. Then $E$ has finite rank and is therefore $p$-adic analytic.
\end{lemma}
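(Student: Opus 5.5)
The plan is to bound the rank of $E$ using Lubotzky--Mann: a pro-$p$ group is $p$-adic analytic if and only if it has finite rank, where $\rk(E)=\sup\{d(H): H\leq E \text{ closed}\}$. Since $Q$ is $p$-adic analytic, $\rk(Q)=r<\infty$. Since $C\simeq\Zp$, every closed subgroup of $C$ is either trivial or isomorphic to $\Zp$, so $\rk(C)=1$.

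Now let $H\leq E$ be closed. Intersecting with $C$ gives an exact sequence
\[1\longrightarrow H\cap C\longrightarrow H\longrightarrow HC/C\longrightarrow 1,\]
with $HC/C\leq Q$ closed. Generators of $H$ are obtained by combining a lift of a generating set of $HC/C$ with a generating set of $H\cap C$. This gives
\[d(H)\leq d(H\cap C)+d(HC/C)\leq 1+r.\]
Hence $\rk(E)\leq r+1<\infty$, and therefore $E$ is $p$-adic analytic. Alternatively, one can run the same argument on open subgroups only, using $\dim E=\dim Q+1$ once analyticity is known.

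The only point needing care is the subadditivity of $d$ under extensions of pro-$p$ groups. For that, I would note that if $X\subseteq H$ maps onto topological generators of $H/N$ and $Y$ topologically generates $N$, then $\overline{\langle X\cup Y\rangle}$ contains $N$ and maps onto $H/N$, so it equals $H$. There is no real obstacle here. The finite generation hypotheses on $E$ and $Q$ are automatic: $E$ is finitely generated because it is an extension of finitely generated groups.
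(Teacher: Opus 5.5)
Your proposal is correct and follows essentially the same argument as the paper: intersect an arbitrary closed subgroup $H\leq E$ with $C$, bound $d(H)\leq d(H\cap C)+d(HC/C)\leq 1+\rk(Q)$, and then invoke the finite-rank characterization of $p$-adic analytic pro-$p$ groups. You also justify the subadditivity of $d$ under extensions explicitly, a step the paper covers only by saying it lifts generators of $HC/C$ and adjoins a generator of $H\cap C$.
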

	
\begin{proof}
Let $H\leq E$ be any closed subgroup. There is an exact sequence 
\[ 1\longrightarrow H\cap C \longrightarrow H \longrightarrow HC/C \longrightarrow  1.\]
The subgroup $HC/C$ is closed in $Q$: it is the continuous image of the compact group $H$ in the Hausdorff group $Q$. Since $C\simeq\Zp$, we have
\[ d(H\cap C)\leq1. \]
Since $Q$ is $p$-adic analytic, it has finite rank. If $r=\rk(Q)$, then
\[d(HC/C)\leq r.\]Lifting generators of $HC/C$ to $H$ and adjoining at most one generator of $H\cap C$ yields
\[ d(H)\leq r+1.\]
This bound holds for every closed subgroup $H\leq E$, so $E$ has finite rank. By the finite-rank characterization of finitely generated $p$-adic analytic pro-$p$ groups \cite[Theorem~5.1]{QuadrelliHHA}, $E$ is $p$-adic analytic.
\end{proof}

\section{The finite-generation obstruction} \label{sec:main}
	
We now prove the main result in the form used later.
	
\begin{theorem}\label{thm:main}
Let $(G,\theta)$ be a finitely generated 1-smooth oriented pro-$p$ group with torsion-free orientation. Suppose that $G$ is a pro-$p$ $\PD^3$ group. Then 
\[K_\theta(G)=1 \qquad\text{or}\qquad K_\theta(G)\text{ is infinitely generated}. \] \end{theorem}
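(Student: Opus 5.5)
Suppose $K=K_\theta(G)$ is finitely generated and nontrivial; the aim is a contradiction. Put $\bar G=G/K$. Since 1-smoothness implies Kummerianity, Proposition~\ref{prop:quotient-uniform} applies: $\bar G$ is uniform of dimension $d(G)$, with exterior mod-$p$ cohomology.

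The first step is to decide whether $K$ has finite or infinite index. If the index were finite, then $K$ would be open in $G$, and hence also a $\PD^3$ group. But $K\subseteq\Phi(G)$, and $\bar G$ is a finite uniform group. A uniform group is torsion-free, so $\bar G=1$. Then $G=K\subseteq\Phi(G)$, which forces $G=1$. This is incompatible with $G$ being $\PD^3$. So $K$ has infinite index, and Theorem~\ref{thm:GJPPZ} applies to $1\neq K\trianglelefteq G$.

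In alternative (i), $K\simeq\Zp$ and $\bar G$ is $p$-adic analytic. Lemma~\ref{lem:cyclic-extension} then shows that $G$ is $p$-adic analytic. Quadrelli's theorem \cite[Theorem~5.7]{QuadrelliHHA} says that a finitely generated $p$-adic analytic 1-smooth group with torsion-free orientation is $\theta$-abelian. So $K_\theta(G)=1$, a contradiction.

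In alternative (ii), $K$ is Demu\v{s}kin and $\bar G$ is virtually $\Zp$. Since $\bar G$ is uniform, hence torsion-free, and virtually $\Zp$, it must be isomorphic to $\Zp$. (Its dimension is $1$, and a uniform group of dimension $1$ is $\Zp$.) Therefore $d(G)=d(\bar G)=1$, so $G$ is procyclic, hence $\Zp$ (it is torsion-free and nontrivial). But $\Zp$ has cohomological dimension $1$, not $3$, contradicting the $\PD^3$ hypothesis. Lemma~\ref{lem:PDdimensions} gives an alternative route, since it forces $\dim\HH^2(G,\Fp)=1$, which is impossible for $\Zp$. This contradiction is quick: it uses only $d(\bar G)=d(G)$ from Lemma~\ref{lem:H1}.

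I expect the main care to be needed in case (i). There one must check that the hypotheses of Lemma~\ref{lem:cyclic-extension} hold, namely that $E=G$ and $Q=\bar G$ are finitely generated and that $\bar G$ is analytic. One must also check that Quadrelli's classification is stated for the full 1-smooth pair with torsion-free orientation, so that it yields $K_\theta(G)=1$ directly rather than merely some $\theta$-abelian quotient. As a consistency check, in that case $G$ is itself uniform of dimension $d(G)=3$.
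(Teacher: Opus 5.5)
Your proof is correct and follows the paper's argument essentially step for step. You rule out finite index using $K\subseteq\Phi(G)$ and the torsion-freeness of the uniform quotient, apply Theorem~\ref{thm:GJPPZ}, eliminate the procyclic-kernel case via Lemma~\ref{lem:cyclic-extension} and Quadrelli's analytic theorem, and eliminate the Demu\v{s}kin-kernel case via $d(G)=d(\bar G)=1$ (your observation $G=\Phi(G)\Rightarrow G=1$ is just a rephrasing of the paper's $d(\bar G)=d(G)\geq 1$).
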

	
	\begin{proof}
Set
\[K=K_\theta(G), \qquad \bar G=G/K. \]
Since $(G,\theta)$ is 1-smooth, it is Kummerian. Therefore Propositions~\ref{prop:canonical} and~\ref{prop:quotient-uniform} apply: $\bar G$ is a uniform, hence torsion-free $p$-adic analytic, pro-$p$ group and 
\begin{equation}
d(\bar G)=d(G).\label{eq:dmain}
\end{equation}
		
Assume for contradiction that $K$ is nontrivial and finitely generated. We first note that $K$ has infinite index in $G$. Indeed, $K\subseteq\Phi(G)$, so \eqref{eq:dmain} implies that $\bar G$ is nontrivial. Since $\bar G$ is uniform, it is torsion-free; hence a nontrivial $\bar G$ is infinite. Thus
\[[G:K]=\infty.\]
Theorem~\ref{thm:GJPPZ} now leaves two possibilities.
		
\smallskip

\noindent\textit{Case 1: $K$ is Demu\v{s}kin and $\bar G$ is virtually $\Zp$.} Because $\bar G$ is $p$-adic analytic and has an open subgroup isomorphic to $\Zp$, its analytic dimension is one. Since $\bar G$ is uniform, 
\[d(\bar G)=\dim(\bar G)=1.\]
Equation \eqref{eq:dmain} gives $d(G)=1$. Hence $G$ is procyclic. The group $G$ has finite cohomological dimension, so it has no nontrivial $p$-torsion; therefore \[G\simeq\Zp. \]This contradicts $\cdp(G)=3$.
		
\smallskip
\noindent\textit{Case 2: $K\simeq\Zp$ and $\bar G$ is virtually Demu\v{s}kin.}
The quotient $\bar G$ is already $p$-adic analytic by Proposition~\ref{prop:quotient-uniform}. Lemma~\ref{lem:cyclic-extension} applied to 
\[ 1\longrightarrow K\longrightarrow G\longrightarrow\bar G \longrightarrow 1\]
shows that $G$ itself is $p$-adic analytic.
		
Now apply Quadrelli's analytic 1-smoothness theorem \cite[Theorem~5.7]{QuadrelliHHA}: a finitely generated $p$-adic analytic 1-smooth pro-$p$ group with torsion-free orientation is $\theta$-abelian. Therefore \[K_\theta(G)=1,\]contradicting the assumption that $K$ is nontrivial.
		
Both alternatives lead to contradictions. Hence a finitely generated $K_\theta(G)$ must be trivial.
	\end{proof}
	
\begin{remark}
The proof uses no classification of arbitrary 1-smooth $\PD^3$ groups. Its force comes from the fact that the canonical Kummerian quotient simultaneously has three properties: its kernel is normal and lies in $\Phi(G)$, its quotient is analytic, and degree-one cohomology is preserved. These are exactly the features that make the normal-subgroup theorem effective. 
\end{remark}
	
\begin{corollary} \label{cor:nonanalytic}
Under the hypotheses of Theorem~\ref{thm:main}, if $G$ is not $\theta$-abelian, then
\[d\bigl(K_\theta(G)\bigr)=\infty.\]
If, in addition, $K_\theta(G)\neq1$, then
\[\cdp\bigl(K_\theta(G)\bigr)\leq2.\]
\end{corollary}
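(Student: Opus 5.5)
The plan is to derive both assertions directly from Theorem~\ref{thm:main}, together with the definition of $\theta$-abelian and the standard fact about subgroups of infinite index in $\PD^n$ groups recalled at the start of Section~\ref{sec:pd3}.

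For the first assertion, suppose $G$ is not $\theta$-abelian and write $K=K_\theta(G)$. The first step is to show that $K\neq1$. If $K$ were trivial, then Proposition~\ref{prop:canonical}(i) would make $\ker(\theta)=\ker(\theta)/K$ a free abelian pro-$p$ group. Together with $K=1$, this is exactly the definition of $G$ being $\theta$-abelian, which is a contradiction. Hence $K\neq1$, and Theorem~\ref{thm:main} then forces $K$ to be infinitely generated. Since $d(K)=\dim_{\Fp}\HH^1(K,\Fp)$ with the convention fixed in the introduction, this gives $d(K)=\infty$.

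For the second assertion, assume $K\neq1$. The key step is to show that $[G:K]=\infty$, which I would argue exactly as in the proof of Theorem~\ref{thm:main}. Because $K\subseteq\Phi(G)$, Lemma~\ref{lem:H1} gives $d(\bar G)=d(G)$. Moreover $d(G)\geq1$, since a $\PD^3$ group is nontrivial (its cohomological dimension is $3$). So $\bar G$ is a nontrivial uniform group, hence torsion-free and therefore infinite.

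Alternatively, one could note that a finite-index closed subgroup of a finitely generated pro-$p$ group is finitely generated, so $d(K)=\infty$ already forces infinite index. This shortcut needs $K\neq1$ to hold, and that is precisely the hypothesis here.

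Once infinite index is established, the recalled fact that subgroups of infinite index in a pro-$p$ $\PD^n$ group have cohomological dimension at most $n-1$ gives $\cdp(K)\leq2$. No step is a serious obstacle. The only point needing care is the first step: showing that ``not $\theta$-abelian'' really yields $K\neq1$, which relies on Proposition~\ref{prop:canonical}(i) to supply the freeness of $\ker(\theta)$ automatically.
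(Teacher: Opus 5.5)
Your proposal is correct and follows the paper's proof essentially step for step. You get $K\neq1$ from non-$\theta$-abelianity via Proposition~\ref{prop:canonical}(i), which rests on the same Quadrelli Theorem~2.2 input the paper cites, then apply Theorem~\ref{thm:main}, obtain infinite index from the nontrivial torsion-free uniform quotient, and conclude with the bound on the cohomological dimension of infinite-index subgroups; your alternative derivation of infinite index from $d(K)=\infty$ (finite-index closed subgroups of a finitely generated pro-$p$ group are open, hence finitely generated) is also valid.
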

	
\begin{proof}
For a finitely generated Kummerian pair, Theorem~2.2 of \cite{QuadrelliHHA} shows that $K_\theta(G)=1$ is equivalent to $\theta$-abelianity. Thus a non-$\theta$-abelian group has $K_\theta(G)\neq1$, and Theorem~\ref{thm:main} implies that the kernel is not finitely generated. Equivalently, $d(K_\theta(G))=\infty$.

As in the proof of Theorem~\ref{thm:main}, the quotient $G/K_\theta(G)$ is a nontrivial torsion-free pro-$p$ group, so the kernel has infinite index. An infinite-index subgroup of a pro-$p$ $\PD^3$ group has cohomological dimension at most two; see, for example, \cite{CastellanoZalesskii}.
\end{proof}

\section{Cohomological consequences of the canonical extension} \label{sec:cohomological}
	
The main theorem identifies the size of a possible nontrivial canonical kernel. The Hochschild--Serre sequence gives an additional constraint on how such a kernel can sit inside $G$. Let 
\begin{equation}
1\longrightarrow K \longrightarrow G \overset{\pi}{\longrightarrow}Q \longrightarrow 1 \label{eq:canonical-extension}
\end{equation}
be the canonical extension, where
\[K=K_\theta(G), \qquad Q=G/K_\theta(G). \]
By Lemma~\ref{lem:H1}, inflation
\[ \HH^1(Q,\Fp)\longrightarrow\HH^1(G,\Fp)\]
is an isomorphism. The five-term exact sequence therefore begins as
\[	0\longrightarrow \HH^1(Q,\Fp) \overset{\inf}{\longrightarrow}\HH^1(G,\Fp) \longrightarrow\HH^1(K,\Fp)^Q \overset{\operatorname{trg}}{\longrightarrow} \HH^2(Q,\Fp) \overset{\inf}{\longrightarrow} \HH^2(G,\Fp).\]
The first map is already an isomorphism, so exactness immediately gives the following.

\begin{proposition}\label{prop:canonical-transgression}
Let $(G,\theta)$ satisfy the hypotheses of Theorem~4.1, and put 
\[K=K_\theta(G),\qquad Q=G/K,\;\; V=H^1(G,\mathbb F_p),\;\; d=\dim_{\mathbb F_p}V.\]
Choose an identification
\[\operatorname{tr}:H^3(G,\mathbb F_p)\xrightarrow{\sim}\mathbb F_p\]
and define the alternating trilinear form
\[T_G:V\times V\times V\longrightarrow\mathbb F_p,\;\; T_G(x,y,z)=\operatorname{tr}(x\smile y\smile z).\]
Let
\[\mu_G:\bigwedge\nolimits^2 V\longrightarrow V^*\]
be the linear map determined by
\[\mu_G(x\wedge y)(z)=T_G(x,y,z).\] Then there is an isomorphism of $\mathbb F_p$-vector spaces 
\[H^1(K,\mathbb F_p)^Q\simeq\ker(\mu_G).\]
Consequently,
\[\dim_{\mathbb F_p}H^1(K,\mathbb F_p)^Q= \binom d2-\operatorname{rank}(\mu_G), \]
and in particular
\[\max\left\{0,\binom d2-d\right\} \leq \dim_{\mathbb F_p}H^1(K,\mathbb F_p)^Q\leq \binom d2. \]
If $K\ne1$, then
\[H^1(K,\mathbb F_p)^Q\ne0.\]
Moreover, in this case the following sharper conclusions hold:
\begin{enumerate}
\item If $d=2$, then
\[\dim_{\mathbb F_p}H^1(K,\mathbb F_p)^Q=1.\]
\item If $d=3$, then $T_G=0$, and
\[\dim_{\mathbb F_p}H^1(K,\mathbb F_p)^Q=3.\]
\item If $d=4$, then
\[\dim_{\mathbb F_p}H^1(K,\mathbb F_p)^Q \in\{3,6\}.\]
\end{enumerate}
\end{proposition}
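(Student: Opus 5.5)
The plan is to read everything off the five-term sequence displayed just before the statement, combined with Lazard's description \eqref{eq:barGcohomology} of $\HH^\bullet(Q,\Fp)$ and Poincar\'e duality for $G$.

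\textbf{Reduction to a kernel of inflation.} Since the first inflation map is an isomorphism by Lemma~\ref{lem:H1}, the map $\HH^1(G,\Fp)\to\HH^1(K,\Fp)^Q$ is zero. Exactness then makes the transgression injective, with image equal to $\ker\bigl(\inf:\HH^2(Q,\Fp)\to\HH^2(G,\Fp)\bigr)$. So
\[\HH^1(K,\Fp)^Q\simeq\ker\bigl(\inf:\HH^2(Q,\Fp)\to\HH^2(G,\Fp)\bigr).\]

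\textbf{Identifying this kernel with $\ker(\mu_G)$.} By Proposition~\ref{prop:quotient-uniform}, $\HH^2(Q,\Fp)=\bigwedge^2\HH^1(Q,\Fp)$, with the wedge product realised by the cup product. Identify $\HH^1(Q,\Fp)$ with $V$ via inflation. Inflation is a ring homomorphism, so $\inf(x\wedge y)=x\smile y$ in $\HH^2(G,\Fp)$. In particular, $x\smile x=0$ in $\HH^2(G,\Fp)$ for all $x\in V$, even for $p=2$, because this already holds in $Q$. Hence $T_G$ is genuinely alternating and $\mu_G$ is well defined on $\bigwedge^2V$.

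Next use Poincar\'e duality (Lemma~\ref{lem:PDdimensions} and its proof). The pairing $\HH^2(G,\Fp)\times\HH^1(G,\Fp)\to\HH^3(G,\Fp)\xrightarrow{\operatorname{tr}}\Fp$ is perfect. Therefore a class $c\in\HH^2(G,\Fp)$ vanishes if and only if $\operatorname{tr}(c\smile z)=0$ for all $z\in V$. Applied to $c=x\smile y$, this says exactly that
\[\ker(\inf)=\ker(\mu_G).\]
This compatibility check, between Lazard's exterior structure and inflation and the duality pairing, is the only delicate point. Everything after it is linear algebra.

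\textbf{Dimension bounds and nonvanishing.} Rank--nullity gives $\dim=\binom d2-\operatorname{rank}\mu_G$, and $\operatorname{rank}\mu_G\le\dim V^*=d$ gives the stated bounds. For the nonvanishing, suppose $K\neq1$. Then $\HH^1(K,\Fp)\neq0$, since $K$ is a nontrivial pro-$p$ group. It is a discrete $p$-torsion module on which the pro-$p$ group $Q$ acts continuously, so it has nonzero $Q$-fixed points. Hence $\HH^1(K,\Fp)^Q\neq0$, i.e.\ $\mu_G$ is not injective.

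\textbf{Small values of $d$.} Each case combines $\ker\mu_G\neq0$ with the structure of alternating trilinear forms, which form the space $\bigwedge^3V^*$.
\begin{enumerate}
\item For $d=2$, we have $\dim\bigwedge^2V=1$. Since $\mu_G$ is not injective, $\mu_G=0$, and the kernel has dimension $1$.
\item For $d=3$, $T_G$ is a scalar multiple of the determinant. If $T_G\neq0$, then $\mu_G:\bigwedge^2V\to V^*$ is the nondegenerate isomorphism given by contraction with a volume form, contradicting non-injectivity. Hence $T_G=0$, $\mu_G=0$, and the kernel has dimension $3$.
\item For $d=4$, every element of $\bigwedge^3V^*$ is decomposable, since $\bigwedge^3V^*\simeq V$ in dimension four. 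So either $T_G=0$, giving dimension $6$, or $T_G=e_1^*\wedge e_2^*\wedge e_3^*$ for some basis $e_1^*,\dots,e_4^*$ of $V^*$. In the latter case the image of $\mu_G$ is spanned by the contractions, which is exactly $\langle e_1^*,e_2^*,e_3^*\rangle$. Then $\operatorname{rank}\mu_G=3$ and the kernel has dimension $6-3=3$.
\end{enumerate}
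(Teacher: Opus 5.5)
Your proposal is correct and takes essentially the same route as the paper. You identify the invariants with the kernel of degree-two inflation via the five-term sequence, use Lazard's exterior description of $H^\bullet(Q,\mathbb F_p)$ to turn inflation into the cup-product map $\bigwedge^2V\to H^2(G,\mathbb F_p)$, and then convert this into $\ker(\mu_G)$ through the perfect Poincar\'e duality pairing; the $p$-group fixed-point argument and the case analysis for $d\le 4$ also match the paper. The only difference is cosmetic: for $d=2$ you get $\mu_G=0$ from $\dim\bigwedge^2V=1$ together with non-injectivity, whereas the paper notes directly that $\bigwedge^3V^*=0$.
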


\begin{proof}
By Lemma~2.6, inflation induces an isomorphism
\[H^1(Q,\mathbb F_p)\xrightarrow{\sim} H^1(G,\mathbb F_p).\]
The Hochschild--Serre five-term exact sequence for
\[ 1\longrightarrow K\longrightarrow G \longrightarrow Q\longrightarrow 1 \]
therefore identifies the transgression with an injection
\[H^1(K,\mathbb F_p)^Q \hookrightarrow H^2(Q,\mathbb F_p)\]
whose image is precisely the kernel of degree-two inflation. Hence
\[H^1(K,\mathbb F_p)^Q \simeq \ker\left( H^2(Q,\mathbb F_p)\xrightarrow{\inf}H^2(G,\mathbb F_p) \right).\]
	
By Proposition~2.7, $Q$ is uniform and $d(Q)=d$. Lazard's theorem gives
\[H^\bullet(Q,\mathbb F_p) \simeq \bigwedge\nolimits^\bullet H^1(Q,\mathbb F_p). \]Using the degree-one inflation isomorphism, we obtain an identification 
\[ H^2(Q,\mathbb F_p) \simeq\bigwedge\nolimits^2 V.\]
Under this identification, degree-two inflation is the cup-product map
\[ c_G:\bigwedge\nolimits^2V \longrightarrow H^2(G,\mathbb F_p),\;\;x\wedge y\longmapsto x\smile y. \]
In particular, squares of degree-one classes vanish in $H^2(G,\mathbb F_p)$, also when $p=2$. Thus $T_G$ is alternating.
	
Since $G$ is a pro-$p$ Poincar\'e duality group of dimension three, cup product induces a perfect pairing
\[H^1(G,\mathbb F_p)\times H^2(G,\mathbb F_p) \longrightarrow H^3(G,\mathbb F_p). \]
Composing with $\operatorname{tr}$ gives an isomorphism
\[ \operatorname{PD}_G: H^2(G,\mathbb F_p)\xrightarrow{\sim}V^*,\]
defined by
\[ \operatorname{PD}_G(\alpha)(z) =\operatorname{tr}(\alpha\smile z).\]
Consequently,
\[\operatorname{PD}_G\circ c_G=\mu_G.\]
Since $\operatorname{PD}_G$ is an isomorphism,
\[\ker(c_G)=\ker(\mu_G), \] which proves the asserted identification. The dimension formula follows immediately. As $\dim_{\mathbb F_p}V^*=d$, one has $\operatorname{rank}(\mu_G)\leq d$, giving the stated bounds.
	
Now suppose that $K\ne1$. Then $H^1(K,\mathbb F_p)\ne0$. Choose a nonzero class $\varphi\in H^1(K,\mathbb F_p)$. The conjugation action of the pro-$p$ group $Q$ on continuous homomorphisms $K\to\mathbb F_p$ is continuous when the latter space is given the discrete topology. Indeed, compactness of $K$ ensures that the stabilizer of each continuous homomorphism is open. Thus the $Q$-orbit of $\varphi$ is finite.
	
The vector space spanned by this orbit is a nonzero finite-dimensional $Q$-submodule on which the action factors through a finite $p$-group. Every nonzero representation of a finite $p$-group over $\mathbb F_p$ has a nonzero fixed vector. Therefore
\[H^1(K,\mathbb F_p)^Q\ne0.\]
	
It remains to examine small values of $d$. If $d=2$, then $\bigwedge^3V^*=0$, so $T_G=0$ and $\mu_G=0$. Hence the invariant subspace has dimension one.
	
If $d=3$, the space $\bigwedge^3V^*$ is one-dimensional. Every nonzero alternating trilinear form on a three-dimensional vector space induces an isomorphism
\[\bigwedge\nolimits^2V\xrightarrow{\sim}V^*.\]
Since $\ker(\mu_G)\ne0$, the form $T_G$ must vanish. Consequently $\mu_G=0$, and the invariant subspace has dimension three.
	
Finally, suppose $d=4$. Every nonzero alternating trilinear form on a four-dimensional vector space can be written, after choosing a suitable basis, as
\[T_G=e_1^*\wedge e_2^*\wedge e_3^*.\]
The associated map $\mu_G$ then has rank three. Thus its rank is either zero or three, according as $T_G$ vanishes or not. It follows that
\[\dim_{\mathbb F_p}\ker(\mu_G) =6-\operatorname{rank}(\mu_G) \in\{6,3\}.\]
\end{proof}

\begin{corollary}\label{cor:canonical-kernel-action}
Let $(G,\theta)$ satisfy the hypotheses	of Theorem~4.1 and suppose that $K=K_\theta(G)\ne1$. Put $Q=G/K$ and $d=d(G)$. Then
\[\dim_{\mathbb F_p}H^1(K,\mathbb F_p)=\infty,\]
whereas
\[1\leq \dim_{\mathbb F_p}H^1(K,\mathbb F_p)^Q \leq\binom d2.\]
More precisely, its dimension is $\binom d2-\operatorname{rank}(\mu_G)$, with $\mu_G$ as in Proposition~\ref{prop:canonical-transgression}.
	
The conjugation action of $Q$ on $K/\Phi(K)$ is nontrivial. Furthermore, if $2\leq d\leq4$, then the cohomology algebra $H^\bullet(G,\mathbb F_p)$ is not generated by its degree-one part. In particular, $G$ is not a Bloch--Kato pro-$p$ group.
\end{corollary}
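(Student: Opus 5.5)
The corollary is assembled from Corollary~\ref{cor:nonanalytic} and Proposition~\ref{prop:canonical-transgression}; only the Bloch--Kato statement needs a genuinely new observation.

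\emph{Infinite-dimensionality.} Since $K\neq1$, Theorem~\ref{thm:main} shows that $K$ is not finitely generated. Hence $\dim_{\Fp}\HH^1(K,\Fp)=d(K)=\infty$.

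\emph{The invariant part.} The bounds $1\leq\dim\HH^1(K,\Fp)^Q\leq\binom d2$ and the exact value $\binom d2-\operatorname{rank}(\mu_G)$ are read off from Proposition~\ref{prop:canonical-transgression}. The lower bound uses the nonvanishing statement there for $K\neq1$.

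\emph{Nontrivial action on $K/\Phi(K)$.} The space $\HH^1(K,\Fp)$ is the $\Fp$-dual of $K/\Phi(K)$ (continuous homomorphisms to $\Fp$), and the duality is $Q$-equivariant. Suppose $Q$ acted trivially on $K/\Phi(K)$. Then every continuous homomorphism $K\to\Fp$ would be $Q$-invariant, so
\[\HH^1(K,\Fp)^Q=\HH^1(K,\Fp).\]
The right side is infinite-dimensional, while the left side has dimension at most $\binom d2$. This is a contradiction.

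\emph{Failure of degree-one generation for $2\leq d\leq4$.} Since $\HH^3(G,\Fp)\simeq\Fp$ by Poincar\'e duality, it suffices to show that the cup-product map $\bigwedge^3V\to\HH^3(G,\Fp)$ is zero. That map is exactly $T_G$ after applying $\operatorname{tr}$, so the task is to show $T_G=0$ in each case.
\begin{itemize}
\item For $d=2$: $\bigwedge^3V=0$, so $T_G=0$ automatically.
\item For $d=3$: Proposition~\ref{prop:canonical-transgression} gives $T_G=0$ directly.
\item For $d=4$: the proposition only gives $\operatorname{rank}(\mu_G)\in\{0,3\}$, and this is the step I expect to be the main obstacle. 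A nonzero $T_G$ is not excluded by dimension counting alone, so one must exploit the perfect pairing $\HH^1\times\HH^2\to\HH^3$.
\end{itemize}

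For $d=4$ I would argue as follows. If $\HH^\bullet(G,\Fp)$ were generated in degree one, then $\HH^2(G,\Fp)$ would be spanned by products $x\smile y$. Hence $c_G$ would be surjective onto a $4$-dimensional space, i.e.\ $\operatorname{rank}(\mu_G)=\operatorname{rank}(c_G)=4$. But $\operatorname{rank}(\mu_G)\in\{0,3\}$, which is a contradiction.

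The same surjectivity argument handles all three cases uniformly. Degree-one generation forces $\operatorname{rank}(\mu_G)=d$. For $d=2$ and $d=3$ we have $\mu_G=0\neq d$, and for $d=4$ we have $\operatorname{rank}(\mu_G)\leq 3<4$. This is the cleanest route, so I would present it this way.

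Finally, Bloch--Kato requires in particular that the full cohomology algebra be quadratic, hence generated in degree one. So $G$ is not Bloch--Kato.
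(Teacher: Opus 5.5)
Your proposal is correct and follows the paper's proof essentially step for step. Infinite generation comes from Theorem~\ref{thm:main}, and the invariant bounds come from Proposition~\ref{prop:canonical-transgression}. Nontriviality of the action follows by comparing the infinite-dimensional $\HH^1(K,\Fp)$ with its finite-dimensional fixed space. Failure of degree-one generation follows from $\operatorname{rank}(c_G)=\operatorname{rank}(\mu_G)<d=\dim_{\Fp}\HH^2(G,\Fp)$ for $2\le d\le 4$.

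The one implicit point is that the span of all products $x\smile y$, including squares when $p=2$, equals $\operatorname{im}(c_G)$. This is supplied by the vanishing of squares established in the proof of Proposition~\ref{prop:canonical-transgression}.
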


\begin{proof} Theorem~4.1 implies that $K$ is not finitely generated. Hence
\[\dim_{\mathbb F_p}H^1(K,\mathbb F_p)=d(K)=\infty.\]The bounds and the dimension formula follow from Proposition~\ref{prop:canonical-transgression}.
	
If the conjugation action on $K/\Phi(K)$ were trivial, the induced action on $H^1(K,\mathbb F_p)$ would also be trivial. Its fixed subspace would then be infinite-dimensional, contrary to the preceding bound.
	
Finally, degree-two cup products span the image of
\[c_G:\bigwedge\nolimits^2V \longrightarrow H^2(G,\mathbb F_p).\]
Under Poincar\'{e} duality, the rank of this map equals $\operatorname{rank}(\mu_G)$. For $d=2$ and $d=3$, the preceding proposition gives $\mu_G=0$. For $d=4$, its rank is at most three. In every case $2\leq d\leq4$, the cup-product map is therefore not surjective, since
\[\dim_{\mathbb F_p}H^2(G,\mathbb F_p)=d.\] Thus the cohomology algebra is not generated in degree one, and $G$ is not Bloch--Kato.
\end{proof}

\begin{corollary} \label{cor:notcentral}
Under the hypotheses of Corollary~\ref{cor:canonical-kernel-action}, the subgroup $K_\theta(G)$ is not central in $G$.
	\end{corollary}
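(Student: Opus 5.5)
The plan is to argue by contradiction and reduce to the statement in Corollary~\ref{cor:canonical-kernel-action} that $Q=G/K$ acts nontrivially on $K/\Phi(K)$. Suppose $K=K_\theta(G)\neq 1$ is central in $G$. Then conjugation by any $g\in G$ fixes every element of $K$. In particular, it fixes every element of the Frattini quotient $K/\Phi(K)$. Since the conjugation action of $G$ on $K/\Phi(K)$ factors through $Q$, the $Q$-action on $K/\Phi(K)$ is trivial. This directly contradicts Corollary~\ref{cor:canonical-kernel-action}, which asserts that this action is nontrivial whenever $K\neq1$.

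To make the contradiction explicit on the cohomological side, I would also phrase it through Proposition~\ref{prop:canonical-transgression}. If $K$ is central, then for $\varphi\in\HH^1(K,\Fp)=\operatorname{Hom}(K,\Fp)$ and $g\in G$ one has $({}^g\varphi)(k)=\varphi(g^{-1}kg)=\varphi(k)$. Hence $\HH^1(K,\Fp)^Q=\HH^1(K,\Fp)$. By Theorem~\ref{thm:main}, or Corollary~\ref{cor:nonanalytic}, this space is infinite-dimensional. However, Proposition~\ref{prop:canonical-transgression} bounds it by $\binom d2<\infty$, because it injects via transgression into $\HH^2(Q,\Fp)\simeq\bigwedge^2 V$. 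This is the contradiction.

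As an alternative route, I note that a central closed subgroup of $G$ is abelian. Since $\cdp(K)\leq 2$ and $K$ is torsion-free, $K$ would be a free abelian pro-$p$ group. It need not be finitely generated a priori, so this route does not immediately conclude, and I will rely on the invariant-dimension argument instead.

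The only point needing care is that the action of $Q$ on $\HH^1(K,\Fp)$ in the five-term sequence is indeed the one induced by conjugation in $G$, with $K$ acting trivially. That is standard, so I expect no real obstacle. The substantive inputs are the infinite generation from Theorem~\ref{thm:main} and the finiteness of $\HH^2(Q,\Fp)$ from Lazard's theorem.
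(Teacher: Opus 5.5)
Your argument is correct and is essentially the paper's: centrality makes the conjugation action of $Q$ on $K/\Phi(K)$, and hence on $\HH^1(K,\Fp)$, trivial, which contradicts Corollary~\ref{cor:canonical-kernel-action}. Your cohomological rephrasing just unpacks that corollary's proof, comparing the infinite $\dim_{\Fp}\HH^1(K,\Fp)$ with the bound $\binom d2$ on the invariants. One aside: the route you set aside does close. A torsion-free abelian pro-$p$ group is $\Zp^I$, and $\cdp(K)\leq 2$ forces $|I|\leq 2$, since otherwise $\Zp^3$ would be a closed subgroup of cohomological dimension three; so a central $K$ would be finitely generated and hence trivial by Theorem~\ref{thm:main}.
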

	
\begin{proof} 
A central kernel would induce the trivial conjugation action on $K_\theta(G)/\Phi(K_\theta(G))$, contrary to Corollary~\ref{cor:canonical-kernel-action}.
\end{proof}
	
\begin{remark}
Proposition~\ref{prop:canonical-transgression} identifies the invariant subspace $\HH^1(K,\Fp)^Q$ with the kernel of the linear map $\mu_G:\bigwedge^2\HH^1(G,\Fp)\to\HH^1(G,\Fp)^*$ induced by the triple cup-product form of $G$. Thus Poincar\'e duality determines the dimension of this invariant subspace from the degree-one cup-product structure, even when $\HH^1(K,\Fp)$ is infinite-dimensional. The remaining module-theoretic problem concerns the action of $Q$ on the full first cohomology of $K$, beyond the information contained in its fixed subspace.
\end{remark}

\section{The finitely generated-kernel case}
\label{sec:classification}
	
When the canonical kernel is finitely generated, the group can be described explicitly.
	
\begin{proposition} \label{prop:classification}
Let $(G,\theta)$ satisfy the hypotheses of Theorem~\ref{thm:main} and assume that $K_\theta(G)$ is finitely generated. Then $G$ is uniform of dimension three and $\theta$-abelian. More precisely: \begin{enumerate}[label=\textup{(\roman*)}]
\item if $\theta$ is trivial, then
\[G\simeq\Zp^3;\]
\item if $\theta$ is nontrivial, then
\[G\simeq\Zp^2\rtimes_\theta\Zp,\]
where the final factor acts on $\Zp^2$ by scalar exponentiation through $\theta$. 
\end{enumerate}
Moreover,
\[\HH^\bullet(G,\Fp)\simeq \bigwedge\nolimits^\bullet\HH^1(G,\Fp)\]
as graded $\Fp$-algebras.
\end{proposition}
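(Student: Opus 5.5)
By Theorem~\ref{thm:main}, the assumption that $K_\theta(G)$ is finitely generated forces $K_\theta(G)=1$. Proposition~\ref{prop:canonical}(i) then says that $\ker(\theta)=\ker(\theta)/K_\theta(G)$ is a free abelian pro-$p$ group, so $G$ is $\theta$-abelian. Proposition~\ref{prop:quotient-uniform}, applied with $\bar G=G/1=G$, shows that $G$ is uniform with
\[\dim(G)=d(G).\]
The cohomology statement
\[\HH^\bullet(G,\Fp)\simeq\bigwedge\nolimits^\bullet\HH^1(G,\Fp)\]
is exactly \eqref{eq:barGcohomology}.

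It remains to show $d(G)=3$. A uniform group of dimension $n$ is a $p$-adic analytic group whose cohomological dimension equals $n$: it is torsion-free, and by Lazard it is a Poincar\'e duality group of dimension $n$. Alternatively, the exterior algebra description gives $\HH^n(G,\Fp)\neq0$ and $\HH^{n+1}(G,\Fp)=0$, so $\cdp(G)=n$, using that $\cdp$ is detected by $\Fp$-coefficients for pro-$p$ groups. Since $\cdp(G)=3$, we obtain $d(G)=3$. As a consistency check, Lemma~\ref{lem:PDdimensions} requires $\dim\HH^2=d$, and indeed $\binom{d}{2}=d$ holds exactly when $d=3$ (for $d\geq1$).

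For the explicit shapes, write $N=\ker(\theta)$, which is free abelian pro-$p$. If $\theta$ is trivial, then $N=G$ is abelian, torsion-free and $3$-generated, so $G\simeq\Zp^3$. If $\theta$ is nontrivial, then $\im(\theta)$ is a nontrivial closed subgroup of the torsion-free procyclic group $1+p\Zp$ (respectively $1+4\mathbb Z_2$), hence isomorphic to $\Zp$. The sequence $1\to N\to G\to\im(\theta)\to1$ therefore splits: choose $g\in G$ with $\theta(g)$ a topological generator of $\im(\theta)$, and $\cl\langle g\rangle\simeq\Zp$ is a complement. The $\theta$-abelian relations $ghg^{-1}=h^{\theta(g)}$ say that $g$ acts on $N$ by scalar exponentiation through $\theta$. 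For the rank of $N$, note $\cdp(N)+1=\cdp(G)=3$ because $G\simeq N\rtimes\Zp$ with $N$ free abelian of finite rank; alternatively, $N$ is open-free-abelian with $\dim N=\dim G-1=2$. Hence $N\simeq\Zp^2$, which gives (ii).

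The main obstacle I anticipate is keeping the dimension count rigorous: one must pass from uniformity to $\cdp(G)=d(G)$, and must check that $N$ is finitely generated of rank exactly $2$. Finite generation of $N$ follows because it is a closed subgroup of a $p$-adic analytic group, and the rank follows from additivity of analytic dimension in $1\to N\to G\to\Zp\to1$. Everything else is a direct combination of earlier results.
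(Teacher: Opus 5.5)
Your proposal is correct and follows essentially the same route as the paper. Theorem~\ref{thm:main} gives $K_\theta(G)=1$, Kummerianity then yields $\theta$-abelianity and uniformity, the equality $\cdp=\dim$ for uniform groups forces $\dim(G)=3$, and the nontrivial-orientation case is handled by splitting $1\to\ker(\theta)\to G\to\im(\theta)\to1$ and reading off the scalar action from the $\theta$-abelian relations. The differences are cosmetic. You build the complement $\cl\langle g\rangle$ explicitly rather than invoking projectivity of $\Zp$, and your $\binom d2=d$ count via Lemma~\ref{lem:PDdimensions} is a nice independent confirmation that $d(G)=3$. One small wording slip: $\ker(\theta)$ is not open in $G$, so ``open-free-abelian'' should simply read ``free abelian of dimension $\dim G-1$''.
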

	
\begin{proof}
By Theorem~\ref{thm:main}, finite generation of $K_\theta(G)$ implies $K_\theta(G)=1$. Since $(G,\theta)$ is Kummerian, \cite[Theorem~2.2]{QuadrelliHHA} implies that it is $\theta$-abelian. By \cite[Proposition~4.2]{QuadrelliHHA}, the underlying group is locally uniform; finite generation therefore makes $G$ uniform.
		
A uniform pro-$p$ group has cohomological dimension equal to its analytic dimension. Since $G$ is $\PD^3$, 
\[\dim(G)=3.\]If $\theta$ is trivial, then $\theta$-abelianity says simply that $G$ is free abelian pro-$p$, and dimension three gives $G\simeq\Zp^3$.

Suppose now that $\theta$ is nontrivial. The image $\im(\theta)$ is a nontrivial closed torsion-free pro-$p$ subgroup of $1+p\Zp$ (or of $1+4\mathbb Z_2$ when $p=2$), hence isomorphic to $\Zp$. We have an exact sequence 
\[1\longrightarrow\ker(\theta) \longrightarrow G \longrightarrow\im(\theta)
\longrightarrow 1.\]
The kernel is free abelian pro-$p$, and $\im(\theta)\simeq\Zp$ is a
free pro-$p$ group of rank one, hence projective. The extension therefore splits. Since $\dim(G)=3$, the kernel has rank two. The $\theta$-abelian relations show that the action of the final $\Zp$ factor on $\Zp^2$ is scalar exponentiation through $\theta$. 

Finally, the cohomology statement follows from Lazard's theorem for uniform pro-$p$ groups. 
\end{proof}
	
\begin{corollary} \label{cor:BK}
Under the hypotheses of Proposition~\ref{prop:classification}, $G$ is Bloch--Kato, and hence weakly Bloch--Kato. \end{corollary}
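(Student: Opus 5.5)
By Proposition~\ref{prop:classification}, $G$ is uniform of dimension three, and its mod-$p$ cohomology is the exterior algebra on degree one:
\[\HH^\bullet(G,\Fp)\simeq\bigwedge\nolimits^\bullet\HH^1(G,\Fp).\]
Being Bloch--Kato means that every closed subgroup $H\leq G$ has mod-$p$ cohomology algebra that is quadratic, i.e.\ generated in degree one with relations in degree two. The plan is to verify this for every closed subgroup $H$, and then note that weakly Bloch--Kato (quadraticity in low degrees, or of $G$ itself) follows trivially.

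\textbf{Step 1: reduce to finitely generated subgroups.} Cohomology commutes with direct limits over the finitely generated closed subgroups of $H$. A direct limit of quadratic algebras is quadratic, since generation in degree one and relations in degree two are both preserved under filtered colimits. So it suffices to treat finitely generated closed $H$.

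\textbf{Step 2: show finitely generated subgroups are uniform.} This is the key point. A $\theta$-abelian group is locally uniform, by \cite[Proposition~4.2]{QuadrelliHHA}, as already used in the proofs of Propositions~\ref{prop:quotient-uniform} and~\ref{prop:classification}. Hence every finitely generated closed subgroup $H$ is uniform. One can also see this directly in the two cases of the classification:
\begin{itemize}
\item For $G\simeq\Zp^3$, every $H$ is $\Zp^k$.
\item For $G\simeq\Zp^2\rtimes_\theta\Zp$, we have $H\cap\Zp^2\simeq\Zp^k$, and the image of $H$ in $\Zp$ is either trivial or some $p^m\Zp$, acting on $H\cap\Zp^2$ by a scalar in $1+p\Zp$ (or $1+4\mathbb Z_2$). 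Such a group is again of the form $\Zp^k\rtimes\Zp$ with scalar action, which is uniform (powerful and torsion-free).
\end{itemize}

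\textbf{Step 3: apply Lazard.} By Lazard's theorem, $\HH^\bullet(H,\Fp)\simeq\bigwedge^\bullet\HH^1(H,\Fp)$. An exterior algebra is quadratic: it is the tensor algebra modulo the degree-two relations $x\otimes y+y\otimes x$ and $x\otimes x$. For $p=2$ this uses that squares vanish in the exterior algebra, which Lazard's isomorphism provides.

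The main obstacle I expect is Step 2, namely making sure that uniformity passes to all finitely generated closed subgroups, especially for $p=2$. The torsion-free convention $\im(\theta)\subseteq 1+4\mathbb Z_2$ is exactly what ensures powerfulness there. I would cite local uniformity from \cite[Proposition~4.2]{QuadrelliHHA}, with the explicit semidirect-product check as a backup. Alternatively, since every finitely generated closed subgroup of $G$ is again $\theta$-abelian with the restricted orientation, one could cite Quadrelli's result that $\theta$-abelian groups are Bloch--Kato directly.
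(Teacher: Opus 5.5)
Your Steps 2 and 3 are what the paper's one-line proof compresses. The paper cites the fact (discussion after \cite[Proposition~4.2]{QuadrelliHHA}) that finitely generated locally uniform pro-$p$ groups are Bloch--Kato, and that fact is exactly local uniformity, plus Lazard's theorem, plus the quadraticity of exterior algebras. Your explicit check of the two cases of Proposition~\ref{prop:classification} is also correct, including the role of $1+4\mathbb Z_2$ in powerfulness for $p=2$.

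Step 1, however, is wrong as stated and should be dropped.
\begin{itemize}
\item For $H_i\leq H$, restriction maps $\HH^\bullet(H,\Fp)\to\HH^\bullet(H_i,\Fp)$. So the finitely generated closed subgroups of $H$ give an inverse system of cohomology rings, not a direct one.
\item $\HH^\bullet(H,\Fp)$ is in general not the inverse limit of this system either. This already fails in degree one: take $H$ free pro-$p$ on a sequence $x_j\to1$. Then $\HH^1(H,\Fp)\simeq\bigoplus_j\Fp$, whereas the inverse limit over the subgroups $\langle x_1,\dots,x_n\rangle$ is $\prod_j\Fp$.
\item The valid limit statement runs the other way: $\HH^\bullet(H,\Fp)$ is the direct limit of $\HH^\bullet(U,\Fp)$ over open subgroups $U\supseteq H$.
\end{itemize}
No reduction is needed here at all. $G$ is uniform, hence powerful of finite rank, so every closed subgroup $H\leq G$ is finitely generated (indeed $d(H)\leq3$). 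Your Step 2 therefore applies to every closed subgroup directly, and with that change your argument is complete.
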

	
\begin{proof}
Finitely generated locally uniform pro-$p$ groups are Bloch--Kato; see the discussion following \cite[Proposition~4.2]{QuadrelliHHA}. The weak Bloch--Kato property follows immediately. 
\end{proof}

\section{Examples, scope, and the remaining regime}\label{sec:examples}

Theorem~\ref{thm:main} distinguishes two possibilities for the canonical Kummerian kernel of a finitely generated 1-smooth pro-$p$ $\PD^3$ group: the kernel is trivial, in which case the group is $\theta$-abelian and uniform, or it is infinitely generated. We first describe the two forms of the $\theta$-abelian case, and then exhibit a non-$\theta$-abelian example with a free canonical kernel of infinite rank.

\begin{example}[Trivial orientation]
Let \[G=\Zp^3\]
with the trivial orientation. Then $(G,\theta)$ is 1-smooth, and $G$ is uniform of dimension three and hence is a pro-$p$ $\PD^3$ group. Since $G$ is abelian,
\[K_\theta(G)=1.\] Its mod-$p$ cohomology algebra is
\[\HH^\bullet(G,\Fp)\simeq\bigwedge\nolimits^\bullet\Fp^3.\]
\end{example}

\begin{example}[Nontrivial scalar orientation]
Let $A\simeq\Zp^2$ and $T\simeq\Zp$. Choose a continuous injective homomorphism
\[\vartheta:T\longrightarrow 1+p\Zp,\]
	with image contained in $1+4\mathbb Z_2$ when $p=2$. Let $T$ act on $A$ by scalar exponentiation through $\vartheta$, and set
	\[G=A\rtimes_\vartheta T.\]
	Equip $G$ with the orientation induced by $\vartheta$. The pair $(G,\theta)$ is $\theta$-abelian and 1-smooth, and therefore
	\[K_\theta(G)=1.\]
	Moreover, $G$ is uniform of dimension three and hence is a pro-$p$ $\PD^3$ group. Its mod-$p$ cohomology algebra is again
	\[\HH^\bullet(G,\Fp)\simeq\bigwedge\nolimits^\bullet\Fp^3.\]
\end{example}

The next example realizes the complementary case of Theorem~\ref{thm:main}. Its construction uses the cyclotomic semidirect products studied by Quadrelli and Weigel \cite{QuadrelliWeigelBP}, and the freeness of its canonical kernel follows from their results for Demu\v{s}kin groups.

\begin{example}[A nontrivial canonical kernel of infinite rank]\label{ex:Demu\v{s}kin-extension}
Let $p=3$, and let $F=\mathbb Q_3(\zeta_3)$, where $\zeta_3$ is a primitive third root of unity. Write $D=G_F(3)$ for the maximal pro-$3$ Galois group of $F$, and let $\chi:D\longrightarrow 1+3\mathbb Z_3$ be its cyclotomic orientation.
	
By the classical description of maximal pro-$p$ Galois groups of local fields, $D$ is a Demu\v{s}kin pro-$3$ group with
\[d(D)=[F:\mathbb Q_3]+2=4,\qquad \cdp(D)=2.\]
The cyclotomic orientation $\chi$ coincides with the canonical Demu\v{s}kin orientation, and $(D,\chi)$ is 1-smooth.
	
Consider the cyclotomic semidirect product $G=\mathbb Z_3\rtimes_\chi D$, where $D$ acts on $\mathbb Z_3$ by multiplication through $\chi$. Let $\pi:G\longrightarrow D$ be the natural projection and equip $G$ with the orientation $\theta=\chi\circ\pi$.
	
The valuation-theoretic description of maximal pro-$3$ Galois groups of Laurent series fields gives an isomorphism of oriented pro-$3$ groups
\[(G,\theta)\simeq \bigl(G_{F((T))}(3),\chi_{F((T))}\bigr).\]
Indeed, since the residue field of $F((T))$ has characteristic zero, its pro-$3$ inertia group is isomorphic to $\mathbb Z_3(1)$. The coefficient-field embedding provides a splitting of the corresponding Galois sequence, and conjugation on inertia is given by the cyclotomic character. In particular, $(G,\theta)$ is 1-smooth by the Kummer-theoretic lifting property of maximal pro-$3$ Galois groups. The construction also belongs to the elementary-type family described by Quadrelli--Weigel \cite[Section~5.3 and Proposition~5.13(f)]{QuadrelliWeigelBP}.
	
The group $G$ fits into a split exact sequence
\[1\longrightarrow\mathbb Z_3\longrightarrow G\longrightarrow D\longrightarrow 1.\]
Since $\mathbb Z_3$ is a pro-$3$ $\PD^1$ group and $D$ is a pro-$3$ $\PD^2$ group, the extension theorem for profinite Poincar\'e duality groups \cite[Theorem~3.7.4]{NSW} implies that $G$ is a pro-$3$ $\PD^3$ group.
	
The canonical kernel of this extension can be identified explicitly. By Quadrelli--Weigel \cite[Proposition~5.9]{QuadrelliWeigelBP},
\[K_\theta(G)=K_\chi(D),\]where the subgroup on the right is identified with its image in the canonical copy of $D$ in $G$. This kernel is nontrivial. Indeed, if $K_\chi(D)=1$, then Kummerianity would imply that $(D,\chi)$ is $\chi$-abelian. Consequently $D$ would be uniform, and hence
\[\cdp(D)=\dim(D)=d(D)=4,\]	contrary to $\cdp(D)=2$.
	
Quadrelli--Weigel \cite[Theorem~5.1]{QuadrelliWeigelBP} prove that the canonical kernel of a Demu\v{s}kin group with its canonical torsion-free orientation is free pro-$p$. Therefore $K_\theta(G)$ is free pro-$3$. Since it is nontrivial, Theorem~\ref{thm:main} shows that it cannot be finitely generated. Thus $K_\theta(G)$ is a free pro-3 group of infinite rank. Moreover, $G$ is not $3$-adic analytic: otherwise the analytic 1-smoothness theorem \cite[Theorem~5.7]{QuadrelliHHA} would force $G$ to be $\theta$-abelian, contradicting $K_\theta(G)\neq1$.
	
The mod-$3$ cohomology of $G$ can also be calculated explicitly. Since $\chi$ is trivial modulo $3$, the map
\[G=\mathbb Z_3\rtimes_\chi D\longrightarrow\mathbb F_3,\qquad (a,g)\longmapsto a \bmod 3\]
is a continuous homomorphism. It determines a class $x\in\HH^1(G,\mathbb F_3)$ whose restriction generates $\HH^1(\mathbb Z_3,\mathbb F_3)$. The Hochschild--Serre spectral sequence for the split extension has only two potentially nonzero rows, corresponding to $\HH^0(\mathbb Z_3,\mathbb F_3)$ and $\HH^1(\mathbb Z_3,\mathbb F_3)$. The class $x$ lifts the generator of the second row, while inflation identifies the first row with $\HH^\bullet(D,\mathbb F_3)$. Multiplication by $x$ identifies the two rows, so the spectral sequence collapses multiplicatively. Since $3$ is odd, graded commutativity gives $x^2=0$, and consequently
\[\HH^\bullet(G,\mathbb F_3)\simeq\HH^\bullet(D,\mathbb F_3)\otimes_{\mathbb F_3}\bigwedge(x)\] as graded $\mathbb F_3$-algebras.
	
In particular, \[d(G)=d(D)+1=5.\]
Put $W=\HH^1(D,\mathbb F_3)$. Then
\[\HH^1(G,\mathbb F_3)=W\oplus\mathbb F_3x.\]Since $D$ is Demu\v{s}kin, cup product induces a nondegenerate alternating pairing
\[W\times W\longrightarrow\HH^2(D,\mathbb F_3)\simeq\mathbb F_3.\]
Under the preceding cohomological decomposition, we have
\[\HH^2(G,\mathbb F_3)\simeq\HH^2(D,\mathbb F_3)\oplus\bigl(W\smile x\bigr).\]
Products of two elements of $W$ span the first summand, while products of $x$ with elements of $W$ span the second. Hence the cup-product map
\[\bigwedge\nolimits^2\HH^1(G,\mathbb F_3)\longrightarrow\HH^2(G,\mathbb F_3)\] is surjective. By Poincar\'e duality, the map $\mu_G$ of Proposition~\ref{prop:canonical-transgression} is therefore surjective as well, and
\[\operatorname{rank}(\mu_G)=\dim_{\mathbb F_3}\HH^2(G,\mathbb F_3)=5.\]
Proposition~\ref{prop:canonical-transgression} now yields
\[\dim_{\mathbb F_3}\HH^1(K_\theta(G),\mathbb F_3)^{G/K_\theta(G)}=\binom52-5=5.\] Thus $G$ is a nonanalytic 1-smooth pro-$3$ $\PD^3$ group with a nontrivial free canonical kernel of infinite rank. Its first kernel cohomology is infinite-dimensional, whereas its invariant subspace has dimension five, determined explicitly by the triple cup-product form. Moreover, its mod-$3$ cohomology is generated in degree one with quadratic relations, and $G$ is Bloch--Kato as the maximal pro-$3$ Galois group of a field containing $\mu_3$. The example therefore realizes the infinite-kernel alternative of Theorem~\ref{thm:main} while satisfying the cohomological restrictions of Proposition~\ref{prop:canonical-transgression}.
\end{example}

The three examples illustrate both alternatives in Theorem~\ref{thm:main}. The first two describe the $\theta$-abelian possibilities, while Example~\ref{ex:Demu\v{s}kin-extension} shows that non-$\theta$-abelian 1-smooth pro-$p$ $\PD^3$ groups occur and may have free canonical kernels of infinite rank. The finite-generation obstruction therefore separates two genuinely realized forms of canonical kernel behavior.

More generally, let $(G,\theta)$ satisfy the hypotheses of Theorem~\ref{thm:main} and suppose that $G$ is not $\theta$-abelian. Then
\begin{equation}
1\neq K_\theta(G)\subseteq\Phi(G),\;\; d(K_\theta(G))=\infty,\qquad \cdp(K_\theta(G))\leq2. \label{eq:remaining}
\end{equation}
Moreover, the quotient
\[Q=G/K_\theta(G)\] is uniform, has $d(Q)=d(G)$, and preserves degree-one mod-$p$ cohomology under inflation.

The Hochschild--Serre sequence and Poincar\'{e} duality give a more precise description of the canonical extension. Writing $K=K_\theta(G)$ and $V=\HH^1(G,\Fp)$, Proposition~\ref{prop:canonical-transgression} identifies
\[\HH^1(K,\Fp)^Q\simeq\ker\left(\bigwedge\nolimits^2V\xrightarrow{\mu_G}V^*\right),\]
where $\mu_G$ is induced by the triple cup-product form of $G$. Consequently,
\[\dim_{\Fp}\HH^1(K,\Fp)^Q=\binom{d(G)}2-\operatorname{rank}(\mu_G).\]
Since $K\neq1$, this invariant subspace is nonzero. Thus
\[1\leq\dim_{\Fp} \HH^1(K,\Fp)^Q\leq\binom{d(G)}2,\]
whereas
\[\dim_{\Fp }\HH^1(K,\Fp)=\infty.\]
In particular, the quotient $Q$ acts nontrivially on $K/\Phi(K)$, and $K$ cannot be central in $G$.

The Bogomolov--Positselski property provides a natural strengthening of these conclusions. Following Quadrelli and Weigel \cite{QuadrelliWeigelBP}, a Kummerian oriented pro-$p$ group has this property when its canonical kernel is free pro-$p$. Their results establish the property for Demu\v{s}kin groups with their canonical orientations and preserve it under cyclotomic semidirect products with free abelian pro-$p$ groups. Example~\ref{ex:Demu\v{s}kin-extension} belongs to this class. For an arbitrary 1-smooth pro-$p$ $\PD^3$ group, Theorem~\ref{thm:main} establishes the infinite-generation alternative, while freeness remains a further structural question.

The remaining questions concern the freeness of canonical kernels, the module structure of their first cohomology, and the extent to which 1-smoothness controls quadraticity in dimension three.

\begin{question}\label{ques:kernel-freeness}
Let $(G,\theta)$ be a finitely generated 1-smooth pro-$p$ $\PD^3$ group with torsion-free orientation. If $K_\theta(G)\neq1$, must $K_\theta(G)$ be a free pro-$p$ group?
\end{question}

A positive answer would establish the Bogomolov--Positselski property throughout this class. Theorem~\ref{thm:main} shows that every nontrivial canonical kernel would then be free of infinite rank, as already occurs in Example~\ref{ex:Demu\v{s}kin-extension}.

\begin{question}\label{ques:kernel-module}
	Let $(G,\theta)$ be a finitely generated 1-smooth pro-$p$ $\PD^3$ group with torsion-free orientation, and assume that $K=K_\theta(G)\neq1$. Put $Q=G/K$ and $V=\HH^1(G,\Fp)$. To what extent is the structure of the discrete $\Fp[[Q]]$-module
	\[\HH^1(K,\Fp)\]
	determined by the triple cup-product form
	\[T_G:\bigwedge\nolimits^3V\longrightarrow\Fp?\]
	In particular, does the triple cup-product form, together with the identification $V\simeq\HH^1(Q,\Fp)$, determine the isomorphism class of this module, or can distinct canonical extensions with the same quotient and triple cup-product form yield nonisomorphic modules?
\end{question}

Proposition~\ref{prop:canonical-transgression} determines the fixed subspace of $\HH^1(K,\Fp)$ from $T_G$. The question asks whether Poincar\'e duality and 1-smoothness impose further constraints on the full infinite-dimensional module, beyond those detected by degree-two inflation and the five-term exact sequence.

\begin{question}\label{ques:weak-bk}
Is every finitely generated 1-smooth pro-$p$ $\PD^3$ group weakly Bloch--Kato?
\end{question}

Theorem~\ref{thm:main}, together with the classification of the $\theta$-abelian case, gives an affirmative answer whenever the canonical kernel is finitely generated. Example~\ref{ex:Demu\v{s}kin-extension} gives a nonanalytic instance with an infinitely generated canonical kernel for which the Bloch--Kato property also holds. The general question therefore concerns the relationship between the structure of infinite canonical kernels and the cohomological restrictions imposed by 1-smoothness.

\end{document}